\def \[{\begin{equation}}
\def \]{\end{equation}}
\newtheorem{thm}{Theorem}[section]
\newtheorem{lem}[thm]{Lemma}
\newtheorem{cor}[thm]{Corollary}
\newtheorem{fact}[thm]{Fact}
\newtheorem{obs}[thm]{Observation}
\newtheorem{rem}[thm]{Remark}
\begin{document}

\setlength{\baselineskip}{20pt}
\begin{center}{\Large \bf Maximizing the Minimum and Maximum Forcing Numbers of Perfect Matchings of Graphs}\footnote{This work was supported by National Natural Science Foundation of China (Grant No. 11871256) and Gansu Provincial Department of Education: Youth Doctoral fund project (Grant No. 2021QB-090)}
\vspace{4mm}

{Qian Qian \uppercase{liu} \quad \quad \quad He Ping \uppercase{Zhang}\footnote{The corresponding author.}}
\vspace{2mm}

\author{Qian Qian \uppercase{liu} \quad \quad \quad He Ping \uppercase{Zhang}\thanks{*Corresponding author}\footnotemark}
    {School of Mathematics and Statistics, Lanzhou University, Lanzhou 730000, P. R. China\\
    E-mail\,$:$ liuqq2016@lzu.edu.cn  \quad \quad \quad  zhanghp@lzu.edu.cn}
\vspace{2mm}
\end{center}
\noindent {\bf Abstract}: Let $G$ be a simple graph with $2n$ vertices and a perfect matching. The forcing number $f(G,M)$ of a perfect matching $M$ of $G$ is the smallest
cardinality of a subset of $M$ that is contained in no other perfect matching of $G$. Among all perfect matchings $M$ of $G$, the minimum and maximum values of
$f(G,M)$ are called the minimum and maximum forcing numbers of $G$, denoted by $f(G)$ and $F(G)$, respectively. Then $f(G)\leq F(G)\leq n-1$. Che and Chen (2011)
proposed an open problem: how to characterize the graphs $G$ with $f(G)=n-1$. Later they showed that for a bipartite graph $G$, $f(G)=n-1$ if and only if $G$ is a
complete bipartite graph $K_{n,n}$. In this paper, we completely solve the problem of Che and Chen, and show that $f(G)=n-1$ if and only if $G$ is a complete multipartite graph or a graph obtained from complete bipartite graph $K_{n,n}$ by adding  arbitrary  edges in the same partite set. For all graphs $G$ with $F(G)=n-1$, we prove that the forcing spectrum of each such graph $G$ forms an integer interval by matching 2-switches and the minimum forcing numbers of all such graphs $G$ form an integer interval from $\lfloor\frac{n}{2}\rfloor$ to $n-1$.

\vspace{2mm} \noindent{\it Keywords}: Perfect matching, minimum forcing number, maximum forcing number, forcing spectrum, complete multipartite graph
\vspace{2mm}

\noindent{AMS subject classification:} 05C70, 05C75, 05C35

 {\setcounter{section}{0}
\section{\normalsize Introduction}\setcounter{equation}{0}
We only consider finite and simple graphs.
 Let $G$ be a graph with vertex set $V(G)$ and edge set $E(G)$. The \emph{order} of $G$ is the number of vertices in $G$. A graph is \emph{trivial} if it contains
 only one vertex. Otherwise, it is \emph{non-trivial}. The \emph{degree} of vertex $v$ in $G$, written $d_G(v)$, is the number of edges incident to $v$. An
 \emph{isolated vertex} is a vertex of degree 0.
 The maximum degree and the minimum degree of $G$ are denoted by $\Delta(G)$ and $\delta(G)$, respectively. If all vertices of $G$ have degree $k$, then $G$ is
 \emph{$k$}-\emph{regular}.
 A complete graph of order $n$ is denoted by $K_{n}$. Let $P_n$ be a path with $n$  vertices and $\overline{P_n}$ be the complement of $P_n$.

For an edge subset $F$ of $G$, we write $G-F$ for the subgraph of $G$ obtained by deleting the edges in $F$. If $F=\{e\}$, we simply write $G-e$ instead of
$G-\{e\}$. For a vertex subset $T$ of $G$, we write $G-T$ for the subgraph of $G$ obtained by deleting all vertices in $T$ and their incident edges. If $T=\{v\}$ is
a singleton, we write $G-v$ rather than $G-\{v\}$.
For a vertex subset $T$ of $G$, we write $G[T]$ for the subgraph $G-(V(G)\setminus T)$, induced by $T$. For a graph $H$, $G$ is \emph{$H$-free} if it contains no
$H$ as an induced subgraph.

\emph{A perfect matching} of a graph is a set of disjoint edges covering all vertices of the graph. A graph $G$ is \emph{factor-critical} if $G-u$ has a perfect
matching for every vertex $u$ of $G$.
A graph $G$ is \emph{bicritical} if $G$ contains an edge and $G-u-v$ has a perfect matching for every pair of distinct vertices $u$ and $v$ in $G$. A 3-connected
bicritical graph is called a \emph{brick}.
For a nonnegative integer $l$, a connected graph $G$ with at least $2l+2$ vertices is \emph{$l$-extendable} if $G$ has a perfect matching and every matching of size
$l$ is contained in a perfect matching of $G$.

A perfect matching coincides with a Kekul\'{e} structure in organic chemistry or a dimer covering in statistic physics. Klein and Randi\'{c} \cite{3} proposed the
``\emph{innate degree of freedom}'' of a Kekul\'{e} structure, i.e., the least number of double bonds can determine this entire Kekul\'{e} structure, which plays an
important role in resonant theory. Afterwards, it was called the forcing number by Harary et al. \cite{2}.
 A \emph{forcing set} for a perfect matching $M$ of $G$ is a subset of $M$ that is contained in no other perfect matching of $G$. The smallest cardinality of a
 forcing set of $M$ is called the \emph{forcing number} of $M$, denoted by $f(G,M)$.

Let $G$ be a graph with a perfect matching $M$. A cycle of $G$ is \emph{M-alternating} if its edges appear alternately in $M$ and $E(G)\setminus M$. If $C$ is an $M$-alternating cycle of $G$, then the symmetric difference $M\oplus E(C):=(M\setminus E(C))\cup(E(C)\setminus M)$ is another perfect matching of $G$.
We use $V(S)$ to denote the set of all end vertices in an edge subset $S$ of $E(G)$.
An equivalent condition for a forcing set of a perfect matching was mentioned by Pachter and Kim as follows.
\begin{lem}[\cite{6}] \label{2.1}Let $G$ be a graph with a perfect matching $M$. Then a subset $S\subseteq M$ is a forcing set of $M$ if and only if $G-V(S)$ contains
no $M$-alternating cycles.
\end{lem}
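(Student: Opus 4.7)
The plan is to prove both directions by contrapositive, leveraging the classical fact that for any two perfect matchings $M$ and $M'$ of $G$, the symmetric difference $M\oplus M'$ is a vertex-disjoint union of even cycles (together with isolated vertices), and each such cycle is $M$-alternating as well as $M'$-alternating.

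For the forward direction, I would assume $S\subseteq M$ is a forcing set and argue by contradiction: suppose $C$ is an $M$-alternating cycle contained in $G-V(S)$. Then setting $M':=M\oplus E(C)$ yields another perfect matching of $G$. Because $V(C)\cap V(S)=\emptyset$, none of the edges of $S$ are incident with a vertex of $C$, so $S$ is untouched by the symmetric difference and $S\subseteq M'$. This gives a perfect matching $M'\neq M$ containing $S$, contradicting the definition of a forcing set.

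For the reverse direction, I would again argue by contradiction: assume $G-V(S)$ contains no $M$-alternating cycle, and suppose some perfect matching $M'\neq M$ satisfies $S\subseteq M'$. Consider the symmetric difference $M\oplus M'$, which is nonempty and decomposes into $M$-alternating cycles. The key observation is that every vertex $v\in V(S)$ is matched by the same $S$-edge in both $M$ and $M'$, so the unique $M$-edge and unique $M'$-edge at $v$ coincide and lie in $M\cap M'$, hence not in $M\oplus M'$. Consequently no alternating cycle of $M\oplus M'$ can pass through $V(S)$, so each such cycle is contained in $G-V(S)$, contradicting the hypothesis.

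I do not anticipate a serious obstacle: the result is essentially bookkeeping about symmetric differences of perfect matchings. The one subtle point worth writing carefully is the second-direction claim that alternating cycles in $M\oplus M'$ avoid \emph{all} of $V(S)$, not merely the edges of $S$; this follows because perfect matchings saturate every vertex by exactly one edge, so the $S$-edges forbid any other $M$- or $M'$-edge at the vertices of $V(S)$ and thereby keep those vertices out of the symmetric difference entirely.
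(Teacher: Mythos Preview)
Your proof is correct and is the standard argument for this classical characterization. Note, however, that the paper does not supply its own proof of this lemma: it is quoted as a known result from Pachter and Kim \cite{6}, so there is no in-paper proof to compare against.
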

Let $c(M)$ denote the maximum number of disjoint $M$-alternating cycles in $G$. Then $f(G,M)\geq c(M)$ by Lemma \ref{2.1}.
For plane bipartite graphs, Pachter and Kim obtained the following minimax theorem.
\begin{thm}[\cite{6}]\label{thm6} Let $G$ be a plane bipartite graph. Then $f(G, M)=c(M)$ for any perfect matching $M$ of $G$.
\end{thm}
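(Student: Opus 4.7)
Since Lemma~\ref{2.1} already implies $f(G,M)\ge c(M)$ — any forcing set must, via its vertex set, meet every $M$-alternating cycle, so its cardinality dominates the size of any disjoint family — the substantive direction is $f(G,M)\le c(M)$, and this is where the plane hypothesis must be used. The plan is to construct a forcing set of size exactly $c(M)$ by an induction on $c(M)$ that peels off one alternating cycle at a time.

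The base case $c(M)=0$ is immediate: $G$ then has no $M$-alternating cycle, so by Lemma~\ref{2.1} the empty set forces $M$. For the inductive step, fix a plane embedding of $G$ and call an $M$-alternating cycle $C$ \emph{innermost} if the open bounded region it encloses contains no other $M$-alternating cycle; such a $C$ exists whenever $M$-alternating cycles exist at all. Pick any $e\in M\cap E(C)$ and set $G':=G-V(e)$ with inherited matching $M':=M\setminus\{e\}$, which is a perfect matching of the plane bipartite graph $G'$. Conditional on the key inequality $c(M')\le c(M)-1$, the inductive hypothesis supplies a forcing set $S'\subseteq M'$ of $G'$ with $|S'|\le c(M)-1$, and then $S'\cup\{e\}$ is a forcing set of $M$ in $G$ of cardinality at most $c(M)$: for any $M$-alternating cycle $D$ of $G-V(S'\cup\{e\})$ would also be an $M'$-alternating cycle of $G'-V(S')$, contradicting Lemma~\ref{2.1} applied to $S'$.

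The main obstacle is therefore to verify $c(M')\le c(M)-1$. The natural attempt is: given a maximum disjoint family $\{D_1,\dots,D_k\}$ of $M'$-alternating cycles in $G'$, adjoin $C$ to obtain $k+1$ disjoint $M$-alternating cycles in $G$ and conclude $k\le c(M)-1$. Because each $D_i$ lies in $G-V(e)$ it automatically avoids the two endpoints of $e$, but one must rule out that some $D_i$ shares an \emph{internal} vertex of $C$ (any vertex of $V(C)\setminus V(e)$). This is where planarity and the innermost choice of $C$ combine: a cycle meeting $C$ at such a vertex must, by the Jordan curve theorem, have an arc inside the disc bounded by $C$, and along with a matching-alternating arc of $C$ this produces an $M$-alternating cycle strictly inside $C$, contradicting innermost-ness. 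Turning this geometric picture into a bookkeeping of bipartite alternation at the shared vertex — accounting for both possible matches of how $D_i$ enters and leaves $C$ and exploiting that an even cycle in a bipartite graph has a well-defined alternation pattern — is the technical heart of the argument and precisely the place where the hypothesis that $G$ be both plane and bipartite is indispensable.
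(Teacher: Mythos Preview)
The paper does not prove Theorem~\ref{thm6}; it is quoted from Pachter and Kim~\cite{6} and used as a tool. So there is no in-paper proof to compare against. For the record, the argument in~\cite{6} is not inductive: one orients $G$ according to $M$, contracts each $M$-edge to obtain a planar digraph in which $M$-alternating cycles become directed cycles and forcing sets become feedback vertex sets, and then invokes a min--max theorem for planar digraphs (a consequence of the Lucchesi--Younger theorem via planar duality) to obtain the equality in one stroke.

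Your inductive scheme, as written, has a genuine gap at the step $c(M')\le c(M)-1$. The assertion that an $M'$-alternating cycle $D$ meeting $C$ must, by the Jordan curve theorem, have an arc inside the disc bounded by $C$ is simply false. Concretely, let $G$ consist of two $4$-cycles $abcda$ and $cdpqc$ glued along the edge $cd$, with bipartition $\{a,c,p\}\cup\{b,d,q\}$ and $M=\{ab,cd,pq\}$. Both $4$-cycles are $M$-alternating and share $cd$, so $c(M)=1$. Draw $C=abcda$ with empty interior (so $C$ is innermost) and take $e=ab\in M\cap E(C)$. Then $G'=G-\{a,b\}$ still contains the $M'$-alternating $4$-cycle $cdpqc$, whence $c(M')=1\not\le c(M)-1=0$. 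Here $D=cdpqc$ meets $C$ along the $M$-edge $cd$ yet lies entirely in the exterior of $C$; no ``alternation bookkeeping'' can manufacture an alternating cycle inside an empty disc.

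More structurally: whenever $D$ shares a vertex with $C$, both being $M$-alternating forces them to share the $M$-edge at that vertex, so $C$ and $D$ overlap on edges; at each shared vertex the symmetric difference $E(C)\triangle E(D)$ then carries two non-$M$-edges and no $M$-edge, so it is not $M$-alternating there. Thus neither the topological picture nor the parity argument you gesture at yields a smaller alternating cycle inside $C$. At a minimum the edge $e$ cannot be chosen arbitrarily (in the example $e=cd$ works while $e=ab$ does not), and it is far from clear that an elementary choice rule exists that makes the induction go through without, in effect, reproving the planar min--max duality that~\cite{6} invokes directly.
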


The \emph{minimum} (resp. \emph{maximum}) \emph{forcing number} of $G$ is the minimum (resp. maximum) value of $f(G,M)$ over all perfect matchings $M$ of $G$, denoted
by $f(G)$
(resp. $F(G)$). Adams et al. \cite{4} showed that determining a smallest forcing set of a perfect matching is NP-complete for bipartite graphs with the maximum degree 3. Using this result,
Afshani et al. \cite{5} proved that determining the minimum forcing number is NP-complete for bipartite graphs with the maximum degree 4. However, the computational complexity
of the maximum forcing number of a graph is still an open problem \cite{5}.

Xu et al. \cite{27} showed that the maximum forcing number of a hexagonal system is equal to its resonant number.  The same result also holds  for a polyomino graph
\cite{64,LW17} and for a BN-fullerene graph  \cite{40}.
In general, for 2-connected plane bipartite graphs, the resonant number can be computed in polynomial time (see Ref. \cite{13} due to Abeledo and Atkinson).
Hence, the maximum forcing numbers of such three classes of graphs can be computed in polynomial time.

Moreover, some minimax results have been obtained \cite{63, 64}: for each  perfect matching $M$ of a hexagonal system $G$ with $f(G,M)=F(G)$,  there exist $F(G)$
disjoint $M$-alternating hexagons in $G$; for every perfect matching $M$ of polyomino graphs $G$ with $f(G,M)=F(G)$ or $F(G)-1$,  $f(G,M)$ is equal to the maximum
number of disjoint $M$-alternating squares in $G$.

Zhang and Li \cite{58}, and Hansen and Zheng \cite{HZ94} independently determined the  hexagonal systems $G$ with $f(G)=1$, and  Zhang and Zhang \cite{61} gave a
generalization to plane bipartite graphs $G$ with $f(G)=1$. For 3-connected cubic graphs $G$ with $f(G)=1$, Wu et al. \cite{60} showed that it can be generated
from $K_4$ via $Y \rightarrow  \Delta$-operation.
For a convex hexagonal system $H(a_1,a_2,a_3)$ with a perfect matching, recently Zhang and Zhang \cite{57} proved that its minimum forcing number is equal to
min$\{a_1, a_2, a_3\}$ by monotone path systems.

For $n$-dimensional hypercube $Q_n$, Pachter and Kim \cite{6} conjectured that $f(Q_n)=2^{n-2}$ for integer $n\geq 2$. Later Riddle \cite{7} confirmed it for even $n$
by the trailing vertex method. Recently, Diwan \cite{25} proved that the conjecture holds by linear algebra.
Using well-known Van der Waerden theorem, Alon  showed that  $F(Q_n)>c2^{n-1}$ for any constant $0<c<1$ and sufficient large $n$ (see \cite{7}).
There are also some researches about the minimum or maximum forcing numbers of other graphs, such as grids \cite{5,6,16,29,9}, fullerene graphs
\cite{19,17,40,62}, toroidal polyhexes \cite{8}, toroidal and Klein bottle lattices \cite{29,7,16}, etc.

We denote by $\mathcal{G}_{2n}$ the set of all graphs of order $2n$ and with a perfect matching. Let $G\in \mathcal{G}_{2n}$. Then each perfect matching of $G$ has $n$ edges and any $n-1$ edges among it form a forcing set. So we have that $f(G)\leq F(G)\leq n-1$.
Che and Chen \cite{1} proposed how to characterize the graphs $G$ with $f(G)=n-1$. Afterwards, they \cite{10} solved the problem for bipartite graphs and obtained the
following result.
\begin{thm}[\cite{10}]\label{thm1.3} Let $G$ be a bipartite graph with $2n$ vertices. Then $f(G)=n-1$ if and only if $G$ is complete bipartite graph $K_{n,n}$.
\end{thm}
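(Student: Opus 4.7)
My plan is to reduce both implications to Lemma~\ref{2.1} applied to the subgraph induced by the endpoints of just two edges of a perfect matching, so that the existence of a potential $M$-alternating cycle on four vertices becomes equivalent to the presence of a single edge.

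For the ``if'' direction, I would assume $G=K_{n,n}$, fix any perfect matching $M$, and observe that for any two edges $e_1=a_1b_1$ and $e_2=a_2b_2$ of $M$ the induced subgraph on $\{a_1,a_2,b_1,b_2\}$ is a $K_{2,2}$, which contains the $M$-alternating $4$-cycle $a_1b_1a_2b_2a_1$. By Lemma~\ref{2.1} no subset of $M$ of size at most $n-2$ can be a forcing set, so $f(K_{n,n},M)\geq n-1$. Combined with the universal bound $f(G)\leq n-1$, this gives $f(K_{n,n})=n-1$.

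For the ``only if'' direction I would argue by contrapositive: suppose $G$ is bipartite with bipartition $(A,B)$, where $|A|=|B|=n$ (forced by the existence of a perfect matching), and $G\neq K_{n,n}$; I will show $f(G)\leq n-2$. Pick non-adjacent $a\in A$, $b\in B$, fix any perfect matching $M$ of $G$, and let $ab',a'b\in M$. Since $ab\notin E(G)$ we automatically have $b'\neq b$ and $a'\neq a$, so $T=\{ab',a'b\}$ consists of two distinct edges of $M$. In the $4$-vertex induced subgraph $G[\{a,a',b,b'\}]$ every cycle has length exactly $4$, and the only $M$-edges available there are $ab'$ and $a'b$, so the unique candidate $M$-alternating cycle is $a\,b'\,a'\,b\,a$, which requires the edge $ab$. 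Since $ab\notin E(G)$, no $M$-alternating cycle lies inside $G[V(T)]$, and Lemma~\ref{2.1} yields that $M\setminus T$ is a forcing set of size $n-2$, so $f(G,M)\leq n-2$.

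Given the brevity of the argument there is no genuine obstacle; the only steps needing a sentence of care are the equality $|A|=|B|=n$ (immediate from the existence of a perfect matching in a bipartite graph) and the enumeration of possible $M$-alternating cycles in a bipartite graph on four vertices with a prescribed pair of $M$-edges. Both reduce to elementary bookkeeping once Lemma~\ref{2.1} is invoked to translate ``forcing set'' into ``absence of $M$-alternating cycle''.
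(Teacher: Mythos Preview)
Your argument is correct. Note, however, that the paper does not supply its own proof of Theorem~\ref{thm1.3}: it is quoted from \cite{10} as a known result, and the paper instead proves the general Theorem~\ref{thm1}, of which the bipartite case is a corollary.

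That said, your approach matches the spirit of the paper's machinery. The ``if'' direction is exactly what the paper does in the sufficiency part of Theorem~\ref{thm1} for graphs in $\mathcal{K}_{n,n}^+$, via Lemma~\ref{2.2} (which in turn is just Lemma~\ref{2.1} applied to pairs of matching edges). Your ``only if'' direction is the contrapositive of the first statement of Lemma~\ref{2.2}: you exhibit, for any perfect matching $M$, two edges $ab',a'b\in M$ such that $G[\{a,a',b,b'\}]$ has no $M$-alternating cycle, whence $f(G,M)\le n-2$. Bipartiteness makes this immediate, since the only possible $4$-cycle on $\{a,a',b,b'\}$ must use the missing edge $ab$. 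The paper's general necessity argument (for Theorem~\ref{thm1}) is necessarily more elaborate, passing through the $\overline{P_3}$-free characterization of complete multipartite graphs and the extendability Lemma~\ref{l4}; in the bipartite case your direct route is both shorter and more transparent.
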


In this paper, we solve the problem of Che and Chen. Let $\mathcal{K}_{n,n}^+$ be a family of graphs obtained by adding arbitrary additional edges in the same partite set to the complete bipartite graph $K_{n,n}$. In Section 2, we discuss some basic properties for graphs $G\in\mathcal{G}_{2n}$ with $F(G)=n-1$, and obtain that $G$ is $n$-connected, 1-extendable except for graphs in $\mathcal{K}_{n,n}^+$. In particular, we give a characterization for a perfect matching $M$ of $G$ with $f(G,M)=n-1$. In Section 3, we answer the problem proposed by Che and Chen, and obtain that $f(G)=n-1$ if and only if $G$ is either a complete multipartite graph or a graph in $\mathcal{K}_{n,n}^+$. In Section 4, for all such 1-extendable graphs $G$ with $F(G)=n-1$, we determine which of them are not 2-extendable.
Finally in Section 5 we show that $f(G)\geq \lfloor\frac{n}{2}\rfloor$ for any graph $G\in\mathcal{G}_{2n}$ with $F(G)=n-1$, and the minimum forcing numbers of all such graphs form an
integer interval $[\lfloor\frac{n}{2}\rfloor,n-1]$. Also we prove that the forcing spectrum (set of forcing numbers of all perfect matchings) of each such graph $G$ forms an integer interval.
\section{\normalsize  Some basic properties of graphs $G\in \mathcal{G}_{2n}$ with $F(G)=n-1$}
Let $G\in \mathcal{G}_{2n}$ with $F(G)=n-1$. In this section, we will obtain some basic properties of graph $G$.
By definition of forcing numbers, we obtain the following observation.
\begin{obs}\label{obs}Let $G\in \mathcal{G}_{2n}$. Then

\noindent(\romannumeral1) $F(G)=n-1$ if and only if $f(G, M)=n-1$ for some perfect matching $M $ of $G$, and

\noindent(\romannumeral2) $f(G)=n-1$ if and only if $f(G, M)=n-1$ for every perfect matching $M$ of $G$.
\end{obs}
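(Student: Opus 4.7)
The plan is to note that this observation is essentially a direct unpacking of the definitions of $F(G)$ and $f(G)$, combined with the universal upper bound $f(G,M)\leq n-1$ already established in the introduction (any $n-1$ edges of an $n$-edge perfect matching form a forcing set, since the remaining single edge is determined).

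For part (\romannumeral1), I would argue as follows. By definition, $F(G)=\max\{f(G,M):M\text{ a perfect matching of }G\}$. If $F(G)=n-1$, then the maximum is attained at some perfect matching $M$, giving $f(G,M)=n-1$. Conversely, if $f(G,M)=n-1$ for some $M$, then $F(G)\geq n-1$; combined with the universal bound $F(G)\leq n-1$, we obtain $F(G)=n-1$.

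For part (\romannumeral2), the argument is symmetric. By definition, $f(G)=\min\{f(G,M):M\text{ a perfect matching of }G\}$. If $f(G)=n-1$, then every perfect matching $M$ satisfies $f(G,M)\geq n-1$; together with the universal bound $f(G,M)\leq n-1$, equality holds for every $M$. Conversely, if $f(G,M)=n-1$ for every $M$, then the minimum over $M$ equals $n-1$, so $f(G)=n-1$.

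There is no real obstacle here; the only ingredient beyond the definitions is the trivial upper bound $f(G,M)\leq n-1$ noted in the paragraph immediately preceding the observation. I would present the proof as two short ``if and only if'' chains, each consisting of one line citing the relevant definition and one line invoking the upper bound.
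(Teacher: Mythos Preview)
Your proposal is correct and matches the paper's approach: the paper presents this observation without proof, stating only that it follows directly from the definition of forcing numbers, which is exactly the unpacking you provide using the definitions of $F(G)$, $f(G)$, and the universal bound $f(G,M)\leq n-1$.
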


We call a graph $G\in \mathcal{G}_{2n}$ with $F(G)=n-1$ \emph{minimal} if $F(G-e)\leq n-2$ for each edge $e$ of $G$.
Next we give a characterization for a perfect matching $M$ of $G$ with $f(G,M)=n-1$.
\begin{lem}\label{2.2}Let $G\in \mathcal{G}_{2n}$ for $n\geq 2$. Then $G$ has a perfect matching $M$ such that $f(G,M)=n-1$ if and only if $G[V(\{e_i,e_j\})]$
contains an
$M$-alternating cycle for any two distinct edges $e_i$ and $e_j$ of $M$. Moreover, $G$ is minimal if and only if $G[V(\{e_i,e_j\})]$ is exactly an $M$-alternating
4-cycle for any two distinct edges $e_i$ and $e_j$ of $M$.
\end{lem}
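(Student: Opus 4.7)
The first ``iff'' follows from Lemma~\ref{2.1} essentially by inspection. Since $M$ is perfect, $V(M)=V(G)$, and for any $S=M\setminus\{e_i,e_j\}$ of size $n-2$ one has $G-V(S)=G[V(\{e_i,e_j\})]$. Also $f(G,M)\le n-1$ is automatic (any $(n-1)$-subset of $M$ leaves only two vertices, which host no cycle), and extending a forcing set keeps it forcing. Hence $f(G,M)=n-1$ is equivalent to ``no $(n-2)$-subset of $M$ is forcing'', which by Lemma~\ref{2.1} translates exactly to ``every $G[V(\{e_i,e_j\})]$ contains an $M$-alternating cycle.''

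For the $\Leftarrow$ direction of the ``moreover'', assume every $G[V(\{e_i,e_j\})]$ is exactly an $M$-alternating 4-cycle. A double count gives $|E(G)|=n^{2}$: the identity $4\binom{n}{2}=n(n-1)+(|E(G)|-n)$ holds because each $M$-edge lies in $n-1$ pair-subgraphs and each non-$M$-edge in exactly one. Now suppose for contradiction that $F(G-e)=n-1$ for some $e\in E(G)$. By Part 1 applied to $G-e$, there is a perfect matching $M^{*}$ of $G-e$ such that every $(G-e)[V(\{f_i,f_j\})]$ contains an $M^{*}$-alternating cycle; since four vertices can host only a 4-cycle, each pair-subgraph has $\ge 4$ edges. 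The analogous double count in $G-e$ yields $2n(n-1)\le n(n-1)+(|E(G-e)|-n)=2n(n-1)-1$, a contradiction. Hence $F(G-e)\le n-2$ for every $e$, i.e., $G$ is minimal.

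For the $\Rightarrow$ direction, assume $G$ is minimal. Part 1 supplies an $M$-alternating 4-cycle $C\subseteq G[V(\{e_i,e_j\})]$ that uses both $e_i$ and $e_j$. Suppose, toward a contradiction, that some $G[V(\{e_i,e_j\})]$ has an extra edge $c\notin E(C)$. Then $c\notin M$ and $c$ joins $V(e_i)$ to $V(e_j)$ (the only candidates among the two non-$C$ edges of $K_4$ on these four vertices). Since the $M$-edge at any given vertex is unique, the only pair $\{e_{i'},e_{j'}\}\subseteq M$ whose vertex set contains both endpoints of $c$ is $\{e_i,e_j\}$ itself, so $c$ appears in no other pair-subgraph. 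Removing $c$ leaves every other pair-subgraph unchanged and preserves $C$ in the pair $\{e_i,e_j\}$, so Part 1 applied to $G-c$ gives $f(G-c,M)=n-1$, whence $F(G-c)=n-1$---contradicting minimality.

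The main obstacle is the $\Leftarrow$ direction of the ``moreover'': minimality concerns \emph{every} perfect matching of $G-e$, but the hypothesis describes only $M$. The plan avoids any structural comparison between an arbitrary $M^{*}$ and $M$ by relying solely on the global edge-count $|E(G)|=n^{2}$ together with the uniform lower bound ``$\ge 4$ edges per pair-subgraph'' supplied by Part 1; a single double-counting inequality then delivers the contradiction.
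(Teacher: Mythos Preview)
Your proof is correct. Part~1 and the $\Rightarrow$ direction of the ``moreover'' are essentially the paper's arguments. The genuine difference lies in the $\Leftarrow$ direction of the ``moreover''.

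The paper proves minimality by taking an arbitrary edge $e$ and an arbitrary perfect matching $M'$ of $G-e$, and then, through a case analysis on the position of $e$ relative to $M$ (the cases $e\in M$; $e=u_iv_j$ with both $u_iv_i,u_jv_j\in M'$; and $e=u_iv_j$ with one of them absent from $M'$), explicitly locates two edges of $M'$ whose induced four-vertex subgraph in $G-e$ has no $M'$-alternating cycle, giving $f(G-e,M')\le n-2$ directly. Your route sidesteps this structural comparison entirely: from the hypothesis you extract the exact edge count $|E(G)|=n^2$ via the double count $4\binom{n}{2}=n(n-1)+(|E(G)|-n)$, and then observe that any witness $M^*$ to $F(G-e)=n-1$ would, by Part~1, force the \emph{same} double count in $G-e$ to satisfy $2n(n-1)\le n(n-1)+(|E(G-e)|-n)=2n(n-1)-1$, an immediate contradiction. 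This is shorter and conceptually cleaner---it shows at once that an $n$-regular structure is forced, so no edge is expendable---while the paper's argument, though longer, is constructive: for each $M'$ it names a specific non-forcing pair, information your counting proof does not supply.
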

\begin{proof}(1) Suppose that $f(G,M)=n-1$. Then $M\setminus\{{e_i,e_j}\}$ is not a forcing set of $M$ for any two distinct
edges $e_i$ and $e_j$ of $M$. By Lemma \ref{2.1}, $G[V(\{e_i,e_j\})]$ contains an $M$-alternating cycle.
 Conversely, for any subset $S$ of $M$ with size less than $n-1$, there are two distinct edges in $M\setminus S$. By the assumption, $G-V(S)$ contains an $M$-alternating cycle. By Lemma \ref{2.1}, $S$ is not a forcing set of $M$, that
 is, $f(G,M)\geq n-1$.

(2) Suppose that $G$ is minimal. Since $f(G,M)=n-1$, $G[V(\{e_i,e_j\})]$ contains an $M$-alternating
4-cycle for any two distinct edges $e_i$ and $e_j$ of $M$. Moreover, $G[V(\{e_i,e_j\})]$ is exactly an $M$-alternating 4-cycle. If not, then $G[V(\{e_i,e_j\})]$ is
isomorphic to $K_4$ or a graph obtained from $K_4$ by deleting an edge, say $u_iu_j$, where $e_l=u_lv_l$ for $l=i,j$. Then $G[V(\{e_i,e_j\})]-{v_iv_j}$ contains an
$M$-alternating cycle. So $F(G-v_iv_j)=f(G-v_iv_j,M)=n-1$, which contradicts the minimality of $G$.

Conversely, by the assumption, we have $f(G,M)=n-1$ where $M=\{e_l=u_lv_l|l=1,2,\dots,n\}$ is a perfect matching of $G$. By Observation \ref{obs}, $F(G)=n-1$.
Next we are to prove that $G$ is minimal. Let $G'=G-e$ where $e$ is an arbitrary edge of $G$. We can show that $G'$ has a perfect matching. If $e\notin M$, then $M$ is a perfect matching of $G'$. If $e=e_i\in M$ for some $1\leq i\leq n$, then $G[\{u_i,v_i,u_j,v_j\}]$ is exactly an $M$-alternating 4-cycle $C$ for any $j\neq i$ and $1\leq j\leq n$. Then $M\oplus E(C)$ is a perfect matching of $G'$. For any perfect matching $M'$ of $G'$, we will show that $f(G', M')\leq n-2$,
and so $G$ is minimal.

Without loss of generality, any edge $e$ of $G$ can be represented as either $u_iv_i$ or $u_iv_j$ where $j\neq i$ because any edge $u_jv_j$ can be written as $v_ju_j$
by switching two end vertices. Let $w\in\{u,v\}$.

{\textbf{Case 1.} $e=u_iv_i$.}

Then $\{u_iw_j,v_iw_k\}\subseteq M'$ for some integers $j, k$ different from $i$. It follows that $v_iw_j\notin E(G')\subset E(G)$ since $G[\{u_i,v_i,u_j,v_j\}]$ is a
4-cycle by the assumption. Then $G[\{u_i,w_j,v_i,w_k\}]$ does not contain a 4-cycle of $G'$ since $v_i$ cannot be adjacent to either $u_i$ or $w_j$ in $G'$. By Lemma
\ref{2.1}, $M'\setminus\{u_iw_j,v_iw_k\}$ is a forcing set of $M'$. So, $f(G',M')\leq n-2$.

{\textbf{Case 2.} $e=u_iv_j$ where $j\neq i$.}

\textbf{Subcase 2.1.} Both $u_iv_i$ and $u_jv_j$ are contained in $M'$.  By the assumption, $G[\{u_i,v_i,u_j,v_j\}]$ is an $M$-alternating 4-cycle of $G$ where
$\{u_iv_i,u_jv_j\}\subseteq M$. So $G'[\{u_i,v_i,u_j,v_j\}]$ is just a path of length three since $G'=G-e=G-u_iv_j$. By
Lemma \ref{2.1}, $M'\setminus \{u_iv_i, u_jv_j\}$ is a forcing set of $M'$. So, $f(G', M')\leq n-2$.

\textbf{Subcase 2.2.} At least one of $u_iv_i$ and $u_jv_j$ is not contained in $M'$. Without loss of generality, assume that $u_jv_j\notin M'.$
Since $e=u_iv_j$ is not an edge of $G'$, there exists a vertex $w_l\neq u_i$ such that $v_jw_l\in M'$. Note that $v_iv_j$ cannot be an edge of $G'\subset G$ since
$G[\{u_i,v_i,u_j,v_j\}]$ is an $M$-alternating 4-cycle of $G$. Then $w_l\neq v_i$. If $v_iw_l\notin E(G')$, then $v_i$ is adjacent to neither $w_l$ nor $v_j$ in $G'$. So
$G'[\{v_i,w_t,w_l,v_j\}]$ contains no $M'$-alternating cycles where $v_iw_t\in M'$. By Lemma \ref{2.1}, $M'\setminus \{v_iw_t,v_jw_l\}$ is a forcing set of $M'$.
Thus, $f(G',M')\leq n-2$. If $v_iw_l\in E(G')$, then $u_iw_l\notin E(G')\subset E(G)$ since $G[\{v_i,u_i,u_l,v_l\}]$ is exactly an $M$-alternating 4-cycle by the assumption.
Since $G'=G-e$, we have $e=u_iv_j\notin E(G')$. So $u_i$ is adjacent to neither $w_l$ nor $v_j$ in $G'$, and
$G'[\{u_i,w_k,w_l,v_j\}]$ contains no $M'$-alternating cycles where $u_iw_k\in M'$. By Lemma \ref{2.1}, $M'\setminus \{u_iw_k,v_jw_l\}$ is a forcing set of $M'$. So, $f(G',M')\leq n-2$.
\end{proof}

By Lemma \ref{2.2}, adding some extra edges to a minimal graph $G$ maintains the same maximum forcing number as $G$.
Clearly, $K_{n,n}$ is minimal for $n\geq 1$ by Lemma \ref{2.2}. For $n\geq 3$, we have other such minimal graphs shown in Fig. \ref{p82}.
\begin{figure}
\centering
\includegraphics[height=2.6cm,width=14cm]{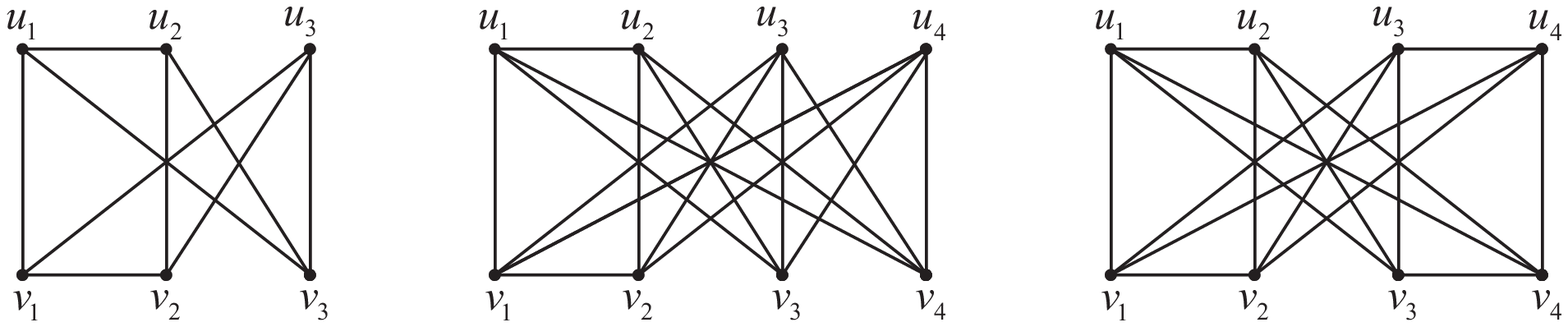}
\caption{\label{p82}Some examples of minimal graphs with $n=3$ and 4.}
\end{figure}

Let $G$ be a graph with a perfect matching. An edge $e$ of $G$ is called a \emph{fixed double bond}
if $e$ is contained in all perfect matchings of $G$. The connectivity and edge connectivity of $G$ are denoted by $\kappa(G)$ and $\lambda(G)$, respectively.

\begin{lem}\label{p1} Assume that $G\in \mathcal{G}_{2n}$ has $F(G)=n-1> 0$. Then

\noindent(\romannumeral1) $G$ has no fixed double bond, that is, $G-e$ has a perfect matching for each edge $e$ of $G$.

\noindent(\romannumeral2) $\kappa(G)\geq n$. Moreover, if $G$ is minimal, then $G$ is $n$-regular, and $\kappa(G)=\lambda(G)=n$.
\end{lem}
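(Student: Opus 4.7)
The plan is to exploit Lemma~\ref{2.2}: by Observation~\ref{obs}(i) there is a perfect matching $M=\{e_1,\dots,e_n\}$ of $G$ with $f(G,M)=n-1$, and then for every two distinct $e_i,e_j\in M$ the induced subgraph $G[V(\{e_i,e_j\})]$ contains an $M$-alternating $4$-cycle. For part~(\romannumeral1), I would split on whether the given edge $e$ lies in $M$. If $e\notin M$, then $M$ is already a perfect matching of $G-e$. If $e=e_i\in M$, pick any $e_j\in M$ with $j\neq i$; Lemma~\ref{2.2} gives an $M$-alternating $4$-cycle $C$ in $G[V(\{e_i,e_j\})]$, and $M\oplus E(C)$ is then a perfect matching of $G$ that avoids $e_i$, so it is a perfect matching of $G-e$.

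For the connectivity bound in~(\romannumeral2), I would argue by contradiction. Suppose $S$ is a vertex cut with $|S|<n$, and let $H_1,\dots,H_t$ ($t\geq 2$) be the components of $G-S$. Since $|M|=n>|S|$, some edge of $M$ has both endpoints outside $S$, hence lies inside some component; say it lies in $H_1$. The key observation is that no two matching edges can be interior to different components: if $e\in M\cap E(H_1)$ and $e''\in M\cap E(H_i)$ with $i\geq 2$, then the $M$-alternating $4$-cycle promised by Lemma~\ref{2.2} through $e$ and $e''$ would require a non-matching edge from $H_1$ to $H_i$, which is impossible. Consequently every vertex of $H_2$, not being matchable inside $H_2$ or to another $H_i$, is matched into $S$. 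Picking $e\in M\cap E(H_1)$ and $e'\in M$ with one endpoint $u'\in H_2$ and the other in $S$, the required $4$-cycle through $e,e'$ would again need a non-matching edge between $u'$ and $V(e)\subseteq H_1$, contradicting that $S$ separates $H_1$ from $H_2$. Hence $\kappa(G)\geq n$.

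For the minimal case I would use the stronger conclusion of Lemma~\ref{2.2}: $G[V(\{e_i,e_j\})]$ is exactly an $M$-alternating $4$-cycle for every pair. Fixing any vertex $u_1$ with $u_1v_1\in M$, for each $j\neq 1$ exactly one of the pairs $\{u_1u_j,v_1v_j\}$ or $\{u_1v_j,v_1u_j\}$ lies in $E(G)$; in either case $u_1$ has exactly one neighbour among $\{u_j,v_j\}$ via a non-matching edge. Counting, $d_G(u_1)=1+(n-1)=n$, so $G$ is $n$-regular. Combining $n\leq \kappa(G)\leq \lambda(G)\leq \delta(G)=n$ yields $\kappa(G)=\lambda(G)=n$.

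I expect the main obstacle to be the connectivity argument: the subtle point is ensuring that after excluding one privileged component $H_1$ I actually produce a matching edge $e'$ that has a vertex in some other $H_j$, so that the $4$-cycle obstruction bites. The cleanest way, as outlined, is to observe that every vertex of each $H_j$ ($j\geq 2$) must be matched through $S$, which both forces the existence of such an $e'$ and guarantees that its $H_j$-endpoint has no neighbour in $H_1$. The remaining steps, including the degree count for minimality, are essentially routine given Lemma~\ref{2.2}.
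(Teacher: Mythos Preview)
Your proof is correct. Parts~(\romannumeral1) and the $n$-regularity argument in~(\romannumeral2) match the paper's proof essentially verbatim.

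For the connectivity bound $\kappa(G)\geq n$, however, you and the paper take different routes. You argue by contradiction through the component structure of $G-S$: locate a matching edge interior to one component $H_1$, deduce that every vertex of $H_2$ must be matched into $S$, and then obtain a contradiction from the $4$-cycle that Lemma~\ref{2.2} guarantees between an edge in $H_1$ and a matching edge straddling $H_2$ and $S$. The paper's argument is more direct: given any $X\subseteq V(G)$ with $|X|<n$, pigeonhole yields an entire matching edge $u_iv_i$ outside $X$; then for every other vertex $w\in V(G)\setminus X$, the $M$-alternating $4$-cycle on $V(\{u_iv_i,e_w\})$ forces $w$ to be adjacent to $u_i$ or $v_i$, so $\{u_i,v_i\}$ serves as a hub making $G-X$ connected. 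The paper's version is shorter and avoids the intermediate step of analyzing how $M$ interacts with multiple components; your version, while slightly longer, makes the obstruction more explicit and would generalize more readily to settings where one cares about the component structure itself. Both rest on the same core fact from Lemma~\ref{2.2}.
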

\begin{proof}(\romannumeral1) It has been implied in the sufficiency part of (2) in the proof of Lemma \ref{2.2}.

(\romannumeral2) Since $F(G)=n-1$, there exists a perfect matching $M$ of $G$ such that $f(G,M)=n-1$ where $M=\{u_iv_i|i=1,2,\dots,n\}$. For any $X\subseteq V(G)$ with $|X|<n$, there is at least one
pair of vertices $u_i$ and $v_i$ not in $X$ for some $1\leq i\leq n$. Then for the other vertices not in $X$, say $u_j$ (resp. $v_j$), either $u_iu_j$ or $v_iu_j$
(resp. $u_iv_j$ or $v_iv_j$) is contained in $E(G)$ by Lemma \ref{2.2}. Hence $G-X$ is connected and so $G$ is $n$-connected. Therefore, $\kappa (G)\geq n$.

 If $G$ is minimal, then for any $1\leq i\leq n$, $u_i$ (resp. $v_i$) is adjacent to exactly one of $u_j$ and $v_j$ for any $1\leq j\leq n$ and $ j\neq i$ by Lemma \ref{2.2}.
 Combining that $u_iv_i$ is an edge, we obtain that $u_i$ and $v_i$ are of degrees $n$. So $G$ is $n$-regular.  Combining that $n\leq \kappa(G)\leq\lambda(G)\leq\delta(G)=n$, we obtain
 that $\kappa(G)=\lambda(G)=n$.
 \end{proof}

The number of odd components of $G$ is denoted by $o(G)$. The following result gives an equivalent condition for a bicritical graph.
\begin{lem}[\cite{14}]\label{l3} A graph $G$ is bicritical if and only if for any $X \subseteq V (G)$ and $|X|\geq 2$, $o(G-X)\leq |X|-2$.
\end{lem}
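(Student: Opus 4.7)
The plan is to derive this lemma as a direct consequence of Tutte's classical 1-factor theorem, which states that a graph $H$ has a perfect matching if and only if $o(H-Y)\leq |Y|$ for every $Y\subseteq V(H)$. The condition in the lemma essentially encodes the Tutte condition applied simultaneously to each of the graphs $G-\{u,v\}$; once this connection is made explicit, both directions reduce to a one-line set translation.

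For the forward direction, assume $G$ is bicritical and let $X\subseteq V(G)$ with $|X|\geq 2$. Pick any two distinct vertices $u,v\in X$ and set $Y=X\setminus\{u,v\}$. By assumption $G-\{u,v\}$ has a perfect matching, so Tutte's theorem applied to $H=G-\{u,v\}$ yields $o(H-Y)\leq |Y|$. Since $H-Y=G-X$ and $|Y|=|X|-2$, this is exactly $o(G-X)\leq |X|-2$.

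For the backward direction, assume the parity condition holds and take any two distinct vertices $u,v\in V(G)$. For an arbitrary $Y\subseteq V(G)\setminus\{u,v\}$, set $X=Y\cup\{u,v\}$, so $|X|\geq 2$. The hypothesis gives $o(G-X)\leq |X|-2=|Y|$, and since $G-X=(G-\{u,v\})-Y$, this is the Tutte condition for $G-\{u,v\}$. Hence $G-\{u,v\}$ has a perfect matching. To match the stated definition of bicritical (which also requires $G$ to contain an edge), note that if $|V(G)|\geq 4$ then any perfect matching of $G-\{u,v\}$ contains at least one edge of $G$; the degenerate case $|V(G)|=2$ is not of interest in the applications here (graphs $G\in \mathcal{G}_{2n}$ with $F(G)=n-1>0$ satisfy $n\geq 2$).

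The proof is essentially a two-line reformulation of Tutte's theorem, so there is no genuine technical obstacle; the only thing to be careful about is the index shift by $2$ (coming from deleting the pair $\{u,v\}$ versus an arbitrary vertex set) and the edge-existence clause in the definition, both of which are handled in the backward direction above.
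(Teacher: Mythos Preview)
Your proof is correct and is the standard derivation from Tutte's 1-factor theorem. Note, however, that the paper does not actually prove this lemma: it is quoted from Lov\'asz and Plummer's \emph{Matching Theory} (reference~\cite{14}) and stated without proof, so there is no ``paper's own proof'' to compare against. Your treatment of the edge-existence clause is adequate for the paper's purposes, since the only place the lemma is invoked (Lemma~\ref{p2}) concerns graphs on $2n\geq 4$ vertices; strictly speaking the stated equivalence fails for edgeless graphs on at most two vertices, but that is a routine boundary issue and your remark correctly flags it.
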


Next we will show that $\mathcal{K}^+_{n,n}$ is a subclass of graphs $G\in \mathcal{G}_{2n}$ with $F(G)=n-1$ such that $G$ contains an independent set of size $n$.
\begin{lem}\label{p2}Assume that $G\in \mathcal{G}_{2n}$ has $F(G)=n-1$.
Then $G$ is a graph in $\mathcal{K}^+_{n,n}$ if and only if $G$ contains an independent set of size $n$. Otherwise, $G$ is a brick, and thus 1-extendable.
 \end{lem}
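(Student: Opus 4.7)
The plan is to prove the stated biconditional first, then tackle the ``otherwise'' part. The easy direction, $G\in\mathcal{K}^+_{n,n}\Rightarrow G$ has an independent set of size $n$, is immediate from the construction: the partite class of the underlying $K_{n,n}$ that receives no extra edges remains an $n$-element independent set. For the converse, using Observation \ref{obs}(i) I pick a perfect matching $M$ of $G$ with $f(G,M)=n-1$. If $I$ is any $n$-element independent set of $G$, a counting argument forces each of the $n$ edges of $M$ to have exactly one endpoint in $I$ and one in $J:=V(G)\setminus I$, so I can label $M=\{u_iv_i: u_i\in I,\, v_i\in J,\ 1\le i\le n\}$. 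Lemma \ref{2.2} then produces an $M$-alternating 4-cycle in $G[\{u_i,v_i,u_j,v_j\}]$ for every $i\ne j$. The chord $u_iu_j$ is unavailable because $I$ is independent, so this 4-cycle must use both crossing edges $u_iv_j$ and $u_jv_i$. Consequently every edge between $I$ and $J$ is present, and since no edges lie within $I$ the only additional edges can sit inside $J$; this is precisely the definition of $\mathcal{K}^+_{n,n}$.

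For the brick conclusion I assume $G$ contains no independent set of size $n$. Three-connectivity is already supplied by Lemma \ref{p1}(ii) as soon as $n\ge 3$ (the cases $n=1,2$ are trivial: $K_2$ belongs to $\mathcal{K}^+_{1,1}$, and for $n=2$ the hypothesis collapses to $G=K_4$, which is itself a brick). The substantive task is bicriticality, which I attack by contradiction through the criterion in Lemma \ref{l3}. Suppose some $X\subseteq V(G)$ with $|X|\ge 2$ satisfies $o(G-X)\ge|X|-1$. A parity check using that $|V(G)|=2n$ is even forces $o(G-X)\equiv|X|\pmod 2$, and combined with Tutte's inequality $o(G-X)\le|X|$ (available because $G$ has a perfect matching) this pins down $o(G-X)=|X|$. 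Since $o(G-X)\ge 2$, $X$ is a vertex cut, so $|X|\ge\kappa(G)\ge n$. But also $|X|=o(G-X)\le|V(G-X)|=2n-|X|$ forces $|X|\le n$; hence $|X|=n$ and each of the $n$ odd components of $G-X$ is a single vertex. This makes $V(G)\setminus X$ an independent set of size $n$, contradicting the standing hypothesis. Therefore $G$ is bicritical, and a 3-connected bicritical graph is by definition a brick.

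Finally, 1-extendability is an immediate consequence: for any edge $uv$, bicriticality supplies a perfect matching of $G-u-v$ whose union with $\{uv\}$ is a perfect matching of $G$ containing $uv$. The one place I expect to slow down is the parity-plus-inequality squeeze that forces $|X|=n$ and each odd component to be a singleton; once that pinch-point is cleared, the required independent set of size $n$ drops out automatically and the rest of the argument is organizational.
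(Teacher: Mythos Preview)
Your proof is correct and follows essentially the same approach as the paper's: the biconditional is argued identically via Lemma~\ref{2.2}, and the brick conclusion rests on the same two ingredients---$\kappa(G)\ge n$ from Lemma~\ref{p1} and the absence of an independent set of size $n$---combined with the parity observation and Lemma~\ref{l3}. The only cosmetic difference is that the paper verifies $o(G-X)\le|X|-2$ directly by splitting into the cases $|X|\le n-1$ and $|X|\ge n$, whereas you argue by contradiction and squeeze $|X|=n$; both routes arrive at the same independent-set contradiction.
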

\begin{proof}(1) The necessity is obvious. We prove sufficiency next. Since $F(G)=n-1$, there exists a perfect matching $M$ of $G$ such that $f(G,M)=n-1$. Let $\{u_1,u_2,\dots,u_n\}$ be an independent set of size $n$ in $G$ and let $M=\{u_iv_i|i=1,2,\dots,n\}$. Then $u_iu_j\notin E(G)$ for any $1\leq i<j\leq n$. By Lemma \ref{2.2}, $\{u_iv_j,v_iu_j\}$ is contained in $E(G)$. Hence, $G$ is some graph in $\mathcal{K}_{n,n}^+$, for there may be some other edges with both end vertices in $\{v_1,v_2,\dots,v_n\}$.

(2) If $G$ is not a graph in $\mathcal{K}^+_{n,n}$, then we will prove that $G$ is a brick. For $n\geq 3$, $G$ is 3-connected by Lemma \ref{p1}. For $n\leq 2$, exactly one graph $K_4$ that is not in $\mathcal{K}^+_{n,n}$ is 3-connected. Next we prove that $G$ is bicritical. Suppose that $X\subseteq V(G)$ with $|X|\geq 2$. If $|X|\leq n-1,$ then
$G-X$ is connected by
Lemma \ref{p1}. Hence $o(G-X)\leq 1\leq |X|-1$. Otherwise, we have $|X|\geq n$. Then $o(G-X)\leq |V(G-X)|\leq n$. Since $G$ is not a graph in $\mathcal{K}_{n,n}^+$, $G$ contains no
independent set of size $n$. Hence $o(G-X)\leq n-1\leq|X|-1$. Since $G$ is of even order, $o(G-X)$ and $|X|$ are of the same parity. So $o(G-X)\leq |X|-2$. By Lemma
\ref{l3}, $G$ is bicritical.
\end{proof}
\section{\normalsize Graphs $G\in \mathcal{G}_{2n}$ with the minimum forcing number $n-1$}
In this section, we will determine all graphs $G\in \mathcal{G}_{2n}$ with $f(G)=n-1$ to completely solve the problem proposed by Che and Chen \cite{1}.

 Tutte's theorem states that $G$ has a perfect matching if and only if $o(G-S)\leq |S|$ for any $S\subseteq
V(G)$. By Tutte's theorem, Yu \cite{34} obtained an equivalent condition for a connected graph with a perfect matching that is not $l$-extendable.
Recently, for an $(l-1)$-extendable graph $G$ with $l\geq 1$, Alajbegovi\'{c} et al. \cite {28} obtained that $G$ is not $l$-extendable if and only if there exists a
subset $S\subseteq V(G)$ such that $G[S]$ contains $l$ independent edges and $o(G-S)=|S|-2l+2$. In fact, $S$ can be chosen so that each component of $G-S$ is
factor-critical (see Theorem 2.2.3 in \cite{20}). Combining these, we obtain the following result.
\begin{lem}\label{l4}Let $l\geq1$ be an integer and $G$ be an $(l-1)$-extendable graph of order at least $2l+2$. Then $G$ is not $l$-extendable if and only if there
exists a subset $S\subseteq V(G)$ such that $G[S]$ contains $l$ independent edges, all components of $G-S$ are factor-critical, and $o(G-S)=|S|-2l+2$.
\end{lem}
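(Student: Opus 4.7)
The plan is to prove both implications via Tutte's theorem, with a maximum-cardinality argument producing the factor-critical refinement in the necessity direction.

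The sufficiency direction is straightforward. Given $S$ with the stated properties, let $F\subseteq G[S]$ be a set of $l$ independent edges; then $|S\setminus V(F)|=|S|-2l$ and $(G-V(F))-(S\setminus V(F))=G-S$ has $|S|-2l+2$ odd components. Tutte's theorem then forces $G-V(F)$ to have no perfect matching, so the matching $F$ of size $l$ witnesses that $G$ is not $l$-extendable.

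For necessity, I would first produce some candidate $S$ satisfying the cardinality equality, then upgrade it via a maximality argument. Fix a matching $F$ of size $l$ in $G$ that fails to extend to a perfect matching; then $G-V(F)$ has no perfect matching, and Tutte's theorem yields $T\subseteq V(G)\setminus V(F)$ with $o(G-(V(F)\cup T))>|T|$, which a parity argument upgrades to $o(G-S_0)\geq |S_0|-2l+2$ for $S_0:=V(F)\cup T$. To pin down equality, I would establish the uniform upper bound $o(G-S)\leq |S|-2l+2$ for every $S$ with $G[S]$ containing $l$ independent edges; this bound uses $(l-1)$-extendability by picking $l$ independent edges $F'\subseteq G[S]$, dropping one edge $e\in F'$, extending $F'\setminus\{e\}$ to a perfect matching $M$ of $G$, and observing that $M$ contributes at most $|S|-2(l-1)=|S|-2l+2$ edges across the cut between $S$ and its complement, each odd component of $G-S$ needing at least one such crossing edge.

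With both bounds in hand, I would choose $S$ of maximum cardinality among all sets satisfying $o(G-S)=|S|-2l+2$ with $l$ independent edges in $G[S]$, and argue that every component of $G-S$ is then factor-critical. Suppose some component $K$ of $G-S$ is not factor-critical. If $|K|$ is even, add any $v\in K$ to $S$; the new $G-S'$ gains at least one odd component (since $K-v$ has odd order), and the uniform upper bound forces equality $o(G-S')=|S'|-2l+2$ with $|S'|>|S|$, contradicting maximality. If $|K|$ is odd, choose $v\in K$ with $K-v$ having no perfect matching, apply Tutte's theorem inside $K-v$ to obtain $T_K$ with $o((K-v)-T_K)\geq |T_K|+2$, and set $S':=S\cup\{v\}\cup T_K$; the same bookkeeping yields the contradiction. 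The main obstacle I anticipate is getting the parity and odd-component counts to line up correctly during these enlargement steps, and verifying that the uniform upper bound from $(l-1)$-extendability truly saturates after each modification.
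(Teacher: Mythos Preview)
Your proof is correct. The paper, however, does not actually prove this lemma: it assembles it from the literature, citing Alajbegovi\'{c} et al.\ for the equivalence ``$G$ is not $l$-extendable iff some $S$ with $l$ independent edges in $G[S]$ satisfies $o(G-S)=|S|-2l+2$'' and then invoking the Gallai--Edmonds structure theorem (via Diestel, Theorem~2.2.3) to ensure $S$ can be chosen with every component of $G-S$ factor-critical. Your route is genuinely different in that it is self-contained: you recover both ingredients directly from Tutte's theorem, first deriving the uniform upper bound $o(G-S)\le|S|-2l+2$ from $(l-1)$-extendability, and then running a maximal-$|S|$ argument that is in effect a local reproof of the relevant fragment of Gallai--Edmonds. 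What the paper's route buys is brevity by quotation; what yours buys is independence from the cited sources and a transparent explanation of why the factor-critical refinement is available. The bookkeeping you flag as a potential obstacle (parity of $o(G-S')$ against $|S'|$, and the two enlargement steps for even components and for odd non-factor-critical components) all goes through exactly as you sketch, since $|V(G)|$ is even and $G[S']\supseteq G[S]$ inherits the $l$ independent edges needed to reapply the upper bound after each enlargement.
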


A \emph{complete multipartite graph} is a graph whose vertices can be partitioned into sets so that $u$ and $v$ are adjacent if and only if $u$ and $v$ belong to
different sets of the partition.
We write $K_{n_1,n_2,\dots,n_k}$ for the complete $k$-partite graph with partite sets of sizes $n_1,n_2,\dots,n_k$. In fact, a complete multipartite graph is a
$\overline{P_3}$-free graph (see Exercise 5.2.2 in \cite{36}).
\begin{lem}[\cite{36}]\label{complete} A graph is $\overline{P_3}$-free if and only if it is a complete multipartite graph.
\end{lem}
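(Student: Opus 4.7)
The plan is to recognize that $\overline{P_3}$ is simply an edge together with an isolated vertex (three vertices $\{u,v,w\}$ with $uv \in E$ and $uw, vw \notin E$), and then to exploit an equivalence relation on $V(G)$ that encodes non-adjacency.

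For the easy direction (complete multipartite implies $\overline{P_3}$-free), I would argue by contradiction. Suppose $G = K_{n_1,\dots,n_k}$ has an induced $\overline{P_3}$ on $\{u,v,w\}$ with the lone edge $uv$. Since $uv \in E(G)$, vertices $u$ and $v$ lie in distinct partite sets $V_i \neq V_j$. Since $w$ is non-adjacent to $u$, $w$ must share a partite set with $u$, so $w \in V_i$; symmetrically $w \in V_j$, contradicting $i \neq j$.

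For the nontrivial direction ($\overline{P_3}$-free implies complete multipartite), I would define a binary relation $\sim$ on $V(G)$ by $u \sim v$ iff $u = v$ or $uv \notin E(G)$. Reflexivity and symmetry are immediate. The key step is transitivity: if $u \sim v$ and $v \sim w$ with $u,v,w$ pairwise distinct, I need $u \sim w$. If instead $uw \in E(G)$, then the induced subgraph $G[\{u,v,w\}]$ has the single edge $uw$ and no others, which is exactly $\overline{P_3}$, contradicting the hypothesis. Hence $\sim$ is an equivalence relation, and I would take its equivalence classes $V_1,\dots,V_k$ as the candidate partite sets. By construction, two vertices in the same class are non-adjacent; and two vertices in different classes must be adjacent, for otherwise they would satisfy $u \sim v$ and lie in the same class. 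Therefore $G = K_{|V_1|,\dots,|V_k|}$, a complete multipartite graph.

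The only subtle point is verifying transitivity of $\sim$, and that is precisely where the $\overline{P_3}$-free hypothesis enters; everything else is bookkeeping. Since this is a standard characterization (cited to the textbook \cite{36}), I do not anticipate any real obstacle beyond clearly identifying $\overline{P_3}$ with $K_2 \cup K_1$ at the outset.
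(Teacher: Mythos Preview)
Your proof is correct and is precisely the standard textbook argument. Note, however, that the paper does not supply its own proof of this lemma: it is quoted from West's textbook \cite{36} (Exercise~5.2.2) and used as a black box. So there is no proof in the paper to compare against; your equivalence-relation argument via non-adjacency is exactly the intended one.
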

  \begin{thm}\label{thm1}Let $G\in \mathcal{G}_{2n}$. Then  $f(G)=n-1$ if and only if $G$ is a complete multipartite graph with each partite set having size no
  more than $n$ or $G$ is some graph in $\mathcal{K}_{n,n}^+$.
 \end{thm}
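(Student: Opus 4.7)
\textbf{Proof plan for Theorem \ref{thm1}.} The plan is to handle the two directions separately, with the technical core reduced to Lemma \ref{2.2}.

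For sufficiency, I use Observation \ref{obs}(ii) to restate $f(G)=n-1$ as: every perfect matching $M$ of $G$ satisfies $f(G,M)=n-1$. By Lemma \ref{2.2} this is equivalent to requiring that, for every $M$ and every pair of distinct $M$-edges $e_i=u_iv_i$ and $e_j=u_jv_j$, the induced subgraph $G[\{u_i,v_i,u_j,v_j\}]$ contains an $M$-alternating (necessarily 4-)cycle. For $G\in\mathcal{K}_{n,n}^+$ this is automatic, since $M$ can only use crossing edges of the underlying $K_{n,n}$, so both $u_iv_j$ and $v_iu_j$ lie in $E(G)$ and produce the required alternating 4-cycle. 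For complete multipartite $G$ I color each vertex by its partite class, note that $u_l$ and $v_l$ have distinct colors for $l\in\{i,j\}$, and verify by a short case check on the four colors that at least one of the candidate alternating 4-cycles $u_iv_iu_jv_ju_i$ (needing $v_iu_j,v_ju_i\in E$) or $u_iv_iv_ju_ju_i$ (needing $v_iv_j,u_iu_j\in E$) must be present.

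For necessity, assume $f(G)=n-1$; then in particular $F(G)=n-1$, so Lemma \ref{p2} applies. If $G$ has an independent set of size $n$, Lemma \ref{p2} immediately places $G$ in $\mathcal{K}_{n,n}^+$. Otherwise, Lemma \ref{p2} yields that $G$ is a brick and in particular 1-extendable, and by Lemma \ref{complete} it is enough to prove $G$ is $\overline{P_3}$-free. Suppose for contradiction that $\{x,y,z\}$ induces a $\overline{P_3}$ with $xy\in E(G)$ and $xz,yz\notin E(G)$. By 1-extendability, $xy$ extends to a perfect matching $M$ of $G$; let $zz'\in M$ be the edge covering $z$. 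Applying Lemma \ref{2.2} to the $M$-edges $xy$ and $zz'$ forces $G[\{x,y,z,z'\}]$ to contain an $M$-alternating 4-cycle, which must be either $x{-}y{-}z{-}z'{-}x$ or $x{-}y{-}z'{-}z{-}x$; the former requires $yz\in E(G)$ and the latter requires $xz\in E(G)$, both contradicting the induced $\overline{P_3}$. Hence $G$ has no induced $\overline{P_3}$ and is complete multipartite; since $G$ has a perfect matching and the largest partite set is an independent set, each partite set has size at most $n$.

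The main obstacle, such as it is, lies in the color-class case check in the sufficiency direction for complete multipartite graphs, which is a small pigeonhole exercise on the four partite labels; the rest of the argument is a direct application of the local characterization from Lemma \ref{2.2}. No extremal, connectivity-based, or switching arguments are needed, and the elegance comes from choosing a strategically placed edge of the putative induced $\overline{P_3}$ to extend to a perfect matching.
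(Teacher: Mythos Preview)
Your proof is correct, and the sufficiency half is essentially the paper's argument in contrapositive form: where the paper assumes some $M$ has two edges with no alternating $4$-cycle between them and exhibits an induced $\overline{P_3}$, you do the pigeonhole on partite labels directly; these are the same computation read in opposite directions.

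The necessity argument, however, is a genuinely different (and shorter) route. The paper also observes that if $\{x,y,z\}$ induces a $\overline{P_3}$ with edge $xy$, then $xy$ lies in no perfect matching (by the same Lemma~\ref{2.2} obstruction you use). But from there the paper concludes $G$ is not $1$-extendable, invokes the structural Lemma~\ref{l4} to produce a set $S$ with $o(G-S)=|S|$ and all components of $G-S$ factor-critical, argues via Lemma~\ref{2.2} that every such component is a singleton, obtains an independent set of size $n$, and only then applies Lemma~\ref{p2}. You instead invoke the full dichotomy of Lemma~\ref{p2} at the outset: either $G\in\mathcal{K}_{n,n}^+$ already, or $G$ is a brick and hence $1$-extendable, in which case $xy$ \emph{does} extend to a perfect matching and the contradiction with Lemma~\ref{2.2} is immediate. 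Your route bypasses Lemma~\ref{l4} and the component-analysis entirely; the paper's route, while longer, makes the appearance of the size-$n$ independent set more explicit and uses Lemma~\ref{p2} only in its easy direction.
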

 \begin{proof}Sufficiency.
 Suppose that $G$ is some graph in $\mathcal{K}^+_{n,n}$. Then any perfect matching $M$ of $G$ is also a perfect matching of $K_{n,n}$. By Lemma \ref{2.2}, $f(G,M)=n-1$. By the arbitrariness
 of $M$, $f(G)=n-1$.

 Suppose that $G=K_{n_1,n_2,\dots,n_k}$ is a complete multipartite graph where $1\leq n_i\leq n$ for $1\leq i\leq k$ and $k\geq 2$. First we will show that $G$ has a
 perfect matching. For any nonempty subset $S$ of $V(G)$, if $G-S$ has at least two vertices from different partite sets of $G$, then $G-S$ is connected and
 $o(G-S)\leq 1\leq |S|$. Otherwise, all vertices of $G-S$ belong to one partite set of $G$. Then $|V(G-S)|\leq n$ and $|S|\geq n$. Hence $o(G-S)\leq n\leq |S|$. For
 $S=\emptyset$, $o(G-S)=o(G)=|S|$. By Tutte's theorem, $G$ has a perfect matching.

 Clearly, the result holds for $n=1$. Next let $n\geq 2$. Suppose to the contrary that $f(G)\leq n-2$. Then there exists a perfect matching $M$ of $G$ and a minimum
 forcing set $S$ of $M$ such that $|S|=f(G,M)=f(G)$. By Lemma \ref{2.1}, $G-V(S)$ contains no $M$-alternating cycles.
 So there are two distinct edges $\{u_iv_i,u_jv_j\}\subseteq M\setminus S$ and $G[\{u_i,v_i,u_j,v_j\}]$ contains no $M$-alternating cycles. That is to say, neither
 $\{u_iu_j,v_iv_j\}$ nor $\{u_iv_j,v_iu_j\}$ is contained in $E(G)$. Without loss of generality, we assume that none of $u_iu_j$ and $u_iv_j$ belong to $E(G)$.  Then
 $G[\{u_i,u_j,v_j\}]$ is isomorphic to $\overline{P_3}$, which contradicts Lemma \ref{complete}.

 Necessity. Suppose that $f(G)=n-1$. If $G$ is a complete multipartite graph, then each partite set of $G$ has size no more than $n$ for $G$ has a perfect matching.
 If $G$ is not a complete multipartite graph, then by Lemma \ref{complete}, $G$ contains an induced subgraph $H$ isomorphic to $\overline{P_3}$.
 Note that the edge $e$ of $H$ is not in any perfect matching of $G$. Otherwise, there is a perfect matching $M$ of $G$ containing $e$. By Observation \ref{obs}, $f(G,M)=n-1$. Let $v$ be the
 vertex of $H$ except for both end vertices of $e$, and $e'$ be the edge of $M$ incident with $v$. Then $G[V(\{e,e'\})]$ contains no $M$-alternating cycles for $v$ is
 not incident with any end vertices of $e$, which contradicts Lemma \ref{2.2}.
 So $G$ is not 1-extendable. By Lemma \ref{l4}, there exists $S\subseteq V(G)$ such that $G[S]$ contains an edge, all components of $G-S$ are factor-critical, and
 $o(G-S)=|S|\geq 2$.

 We claim that all components of $G-S$ are singletons. Otherwise, assume that $C_1$ is a non-trivial component of $G-S$. Let $M$ be a perfect matching of $G$. By Observation \ref{obs}, $f(G,M)=n-1$. Since $o(G-S)=|S|$, $M$ matches $S$ to distinct components of $G-S$. Assume that $e_1$ is an edge in  $M\cap E(C_1)$ and $e_2$ is an edge of $M$ which connects a vertex of $S$ and a vertex of another
 component $C_2$ of $G-S$. Then $G[V(\{e_1,e_2\})]$ contains no $M$-alternating cycles, which contradicts Lemma \ref{2.2}. Therefore, each component of $G-S$ is a
 singleton and so $|S|+o(G-S)=2n$. It follows that $o(G-S)=n$ since we have shown that $o(G-S)=|S|$. So $G$ contains an independent set of size $n$. By Lemma
 \ref{p2}, $G$ is a graph in $\mathcal{K}^+_{n,n}$.
\end{proof}

Taking $n=3$ for example, $K_{3,3},K_{3,2,1},K_{3,1,1,1},K_{2,2,2},K_{2,2,1,1},K_{2,1,1,1,1},K_6$ are all complete multipartite graphs with each partite set having
 size no more than 3 and $K_{3,3}+e$ is the unique graph in $\mathcal{K}^+_{3,3}$ except for above complete multipartite graphs, where $e$ is an edge connecting any two
 nonadjacent vertices of $K_{3,3}$.
\section{\normalsize Extendability of graphs $G\in\mathcal{G}_{2n}$ with $F(G)=n-1$}
Let $G$ be a graph in $\mathcal{G}_{2n}$ with $F(G)=n-1$ and be different from graphs in $\mathcal{K}_{n,n}^+$. By Lemma \ref{p2}, $G$ is 1-extendable. However, it is not necessarily 2-extendable.
 We know that an $l$-extendable graph is $(l-1)$-extendable for an integer $l\geq 1$, and 2-extendable graphs are either bricks or braces (2-extendable bipartite
 graphs) \cite{43}. In this section, we will determine which graphs in Lemma \ref{p2} are 1-extendable but not 2-extendable.
\begin{thm}\label{p3}Let $G\in\mathcal{G}_{2n}$ with $F(G)=n-1$ where $n\geq 3$. Then
$G$ is 1-extendable but not 2-extendable if and only if $G$ has a
perfect matching $M=\{u_iv_i| i=1,2,\dots,n\}$ with $f(G,M)=n-1$ so that one of the following conditions holds.

(\romannumeral1) $n\geq 4$, $G[\{v_1,v_2,\dots,v_n\}]$ consists of one triangle and $n-3$ isolated vertices, and  $G[\{u_1,u_2,\dots,u_n\}]$ has at least two
independent edges (see an example in Fig. \ref{p91111}(a)).

(\romannumeral2) $\{v_1,v_2,\dots,v_{n-1}\}$ is an independent set, $G[\{u_1,u_2,\dots,u_n,v_n\}]$ has at least two independent edges, and
$\{v_iv_n,v_ju_n\}\subseteq E(G)$ for some $i$ and $j$ with $1\leq i,j\leq n-1$ (see examples (b) and (c) in Fig. \ref{p91111}).
\begin{figure}
\centering
\includegraphics[height=3.2cm,width=14cm]{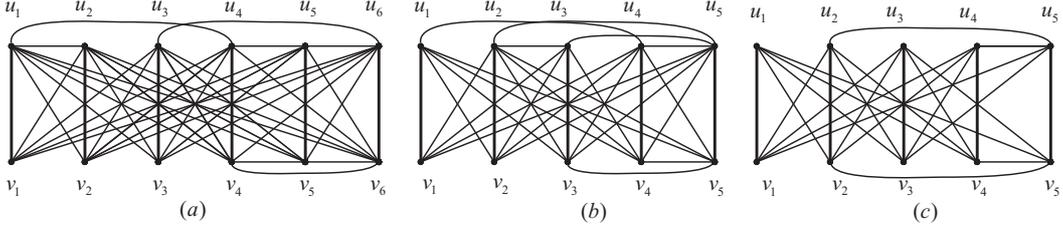}
\caption{\label{p91111}Three examples for non-2-extendable graphs.}
\end{figure}
\end{thm}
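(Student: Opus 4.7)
The plan is to prove each direction of the biconditional separately.

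For sufficiency, suppose condition~(i) or~(ii) holds for a perfect matching $M=\{u_iv_i\}$. First, I would verify via Lemma~\ref{2.2} that $f(G,M)=n-1$, hence $F(G)=n-1$. Next, I would show that $G$ has no independent set of size $n$ by analyzing every partition of $\{1,\ldots,n\}$ into ``$u$-choosers'' and ``$v$-choosers'' (any independent set of size $n$ contains exactly one of $\{u_i,v_i\}$ for each $i$): the cross edges $u_av_b,v_au_b$ forced by Lemma~\ref{2.2} whenever $v_av_b\notin E(G)$, combined with the triangle on $\{v_1,v_2,v_3\}$ and the two independent edges in $G[\{u_1,\ldots,u_n\}]$ in case~(i) (or with the edges $v_iv_n,v_ju_n$ and the two independent edges in $G[\{u_1,\ldots,u_n,v_n\}]$ in case~(ii)), would rule out every candidate. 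By Lemma~\ref{p2}, $G$ is therefore a brick and hence 1-extendable. To witness non-2-extendability I would exhibit a 2-matching failing to extend via Tutte's theorem: in case~(i), the two independent edges $u_au_b,u_cu_d$ in $G[\{u_1,\ldots,u_n\}]$ are not extendable because in $G-\{u_a,u_b,u_c,u_d\}$ the cut $\{u_i:i\notin\{a,b,c,d\}\}$ of size $n-4$ separates off $G[\{v_1,\ldots,v_n\}]$ with $n-2$ odd components (one triangle and $n-3$ singletons); case~(ii) is analogous, using two independent edges in $G[\{u_1,\ldots,u_n,v_n\}]$ and the independent set $\{v_1,\ldots,v_{n-1}\}$ contributing $n-1$ odd components against a cut of size $n-3$.

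For necessity, assume $G$ is 1-extendable, $F(G)=n-1$, and not 2-extendable. By Lemma~\ref{l4} there exists $S\subseteq V(G)$ with $G[S]$ containing two independent edges, every component of $G-S$ factor-critical, and $o(G-S)=|S|-2$. Writing $|S|=s$ and summing the odd sizes $2a_i+1$ of the $s-2$ components gives $\sum a_i=n-s+1\geq 0$, so $s\leq n+1$. The plan is to show that only $s=n+1$ and $s=n$ can arise, corresponding to~(ii) and~(i) respectively. In the subcase $s=n+1$, every component of $G-S$ is a singleton, so $V(G-S)$ is independent of size $n-1$, and any perfect matching $M$ of $G$ has exactly one edge inside $S$; I label the singletons $v_1,\ldots,v_{n-1}$, their $M$-partners $u_1,\ldots,u_{n-1}$, and the $S$-internal $M$-edge $u_nv_n$, and verify $f(G,M)=n-1$ pair by pair via Lemma~\ref{2.2}. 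The edges $v_iv_n,v_ju_n$ required by~(ii) are then forced: their absence would produce an independent set of size $n$ (either $\{v_1,\ldots,v_n\}$ or $\{v_1,\ldots,v_{n-1},u_n\}$, the latter using that Lemma~\ref{2.2} forces $u_ju_n,v_jv_n\in E(G)$ for all $j<n$ when $v_ju_n\notin E(G)$), reducing via Lemma~\ref{p2} and 1-extendability to $G=K_{n,n}$, which is 2-extendable for $n\geq 2$, a contradiction. In the subcase $s=n$ (which forces $n\geq 4$ since $|S|=3$ cannot contain two independent edges), $G-S$ is a triangle $T$ plus $n-3$ singletons; I would produce a perfect matching $M$ with no edge inside $T$ so that $\{u_1,\ldots,u_n\}$ coincides with $S$ and $\{v_1,\ldots,v_n\}$ with $V(G-S)$, then label the triangle vertices $v_1,v_2,v_3$ and singletons $v_4,\ldots,v_n$ to obtain condition~(i) after verifying $f(G,M)=n-1$.

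The main obstacles will be: (a) ruling out $s\leq n-1$, which requires showing that the presence in $G-S$ of either a component of size $\geq 5$ or at least two components of size $\geq 3$ is incompatible with the alternating-4-cycle condition of Lemma~\ref{2.2} for any perfect matching of forcing number $n-1$; and (b) constructing the triangle-external matching $M$ with $f(G,M)=n-1$ in the $s=n$ subcase, since a short 2-switch along a single $M$-alternating $4$-cycle through $T$ merely rotates the internal-$T$ edge around $T$ rather than eliminating it, so a longer switching argument combining Lemma~\ref{2.2} with the connectivity and degree bounds of Lemma~\ref{p1} is needed.
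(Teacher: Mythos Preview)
Your overall strategy matches the paper's: use Lemma~\ref{l4} for the structural set $S$, bound the component sizes via Lemma~\ref{2.2}, and split into the cases $|S|=n$ and $|S|=n+1$. The sufficiency argument and the handling of case~(ii) are essentially the same as the paper's.

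Your obstacle~(b), however, is not a genuine obstacle, and your proposed ``longer switching argument'' is unnecessary. In the necessity direction you should not try to \emph{construct} a triangle-free matching and then \emph{verify} its forcing number; instead, simply take from the start any perfect matching $M$ with $f(G,M)=n-1$ (one exists since $F(G)=n-1$) and observe that this $M$ already has no edge inside the triangle $T$. The reason is exactly the Lemma~\ref{2.2} argument you planned for obstacle~(a): if some edge $e_1\in M$ lay inside $T$, pick any singleton component $\{w\}$ of $G-S$ (there are $n-3\ge 1$ of them) and let $e_2$ be the $M$-edge at $w$. Since $w$ lies in a different component of $G-S$ from both endpoints of $e_1$, the vertex $w$ is adjacent to neither endpoint of $e_1$, so $G[V(\{e_1,e_2\})]$ contains no $M$-alternating $4$-cycle, contradicting Lemma~\ref{2.2}. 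Thus $M$ matches each vertex of $T$ into $S$, the labeling $S=\{u_1,\dots,u_n\}$, $V(G-S)=\{v_1,\dots,v_n\}$ with $u_iv_i\in M$ is immediate, and condition~(i) follows. No $2$-switches are needed; your remark that a single switch ``rotates'' the triangle edge is based on starting from the wrong $M$.

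A minor point: in sufficiency, $f(G,M)=n-1$ is part of the hypothesis of conditions~(i) and~(ii), so there is nothing to ``verify via Lemma~\ref{2.2}'' there.
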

\begin{proof}Sufficiency. Since $f(G,M)=n-1$, by Lemma \ref{p2}, $G$ is 1-extendable or $G$ contains an independent set of size $n$. We claim that $G$ contains no independent set of size $n$. If we have done, then $G$ is 1-extendable. Let $S=\{u_1,u_2,\dots,u_n\}$ (resp. $\{u_1,u_2,\dots,u_n,v_n\}$) be a subset of $V(G)$
corresponding to (\romannumeral1) (resp. (\romannumeral2)). Then $G[S]$ contains two independent edges, all components of $G-S$ are factor-critical, and
$o(G-S)=|S|-2$. By Lemma \ref{l4}, $G$ is not 2-extendable.

Now we prove the claim. Let $I$ be any independent set of $G$. We will prove that $|I|\leq n-1$. We consider the graphs $G$ satisfying (\romannumeral1). Let $\{v_1,v_2,\dots,v_{n-3}\}$ be the set of $n-3$ isolated vertices and $G[\{v_{n-2},v_{n-1},v_n\}]$ be the triangle of $G[\{v_{1},v_{2},\dots,v_n\}]$. If $I\subseteq\{u_1,u_2,\dots,u_n\}$, then $|I|\leq n-2$ for $G[\{u_1,u_2,\dots,u_n\}]$ has at
least two independent edges. Otherwise, there exists $v_i\in I$ for some $1\leq i\leq n$.
For any $1\leq j\leq n$ and $j\neq i$, $G[\{u_i,v_i,u_j,v_j\}]$ contains an $M$-alternating 4-cycle by Lemma \ref{2.2}. So if $v_iv_j\notin E(G)$ for some $j$ then $v_iu_j\in E(G)$ and $u_j\notin I$. Hence, if $1\leq i\leq n-3$ then $u_j\notin I$ for any $1\leq j\leq n$ and $j\neq i$ since $v_iv_j\notin E(G)$.
Hence $I\subseteq \{v_1,v_2,\dots,v_n\}$ and $|I|\leq n-2$. If $n-2\leq i\leq n$, then $u_j\notin I$ for any $1\leq j\leq n-3$ since $v_iv_j\notin E(G)$. Hence $I\subseteq \{v_1,v_2,\dots,v_{n-3},v_i\}\cup(\{u_{n-2},u_{n-1},u_n\}\setminus \{u_i\})$.
If $v_1\in I$, then $\{u_{n-2},u_{n-1},u_n\}\cap I=\emptyset$ by Lemma \ref{2.2}, so $|I|\leq n-2$. Otherwise, $v_1\notin I$. Then  $|I|\leq n-1$.

We consider the graphs $G$ satisfying (\romannumeral2). If $I\subseteq\{u_1,u_2,\dots,u_n,v_n\}$, then $|I|\leq n-1$ for $G[\{u_1,u_2,\dots,u_n,v_n\}]$ has at
least two independent edges. Otherwise, there exists $v_k\in I$ for some $1\leq k\leq n-1$. Since $v_kv_l\notin E(G)$ for any $1\leq l\leq n-1$ and $l\neq k$, $v_ku_l\in E(G)$ since $G[\{u_k,v_k,u_l,v_l\}]$ contains an $M$-alternating 4-cycle by Lemma \ref{2.2}. So $u_l\notin I$ and $I\subseteq \{v_1,v_2,\dots,v_n,u_n\}$. By the assumption, $\{v_ju_n, v_{i}v_n\}\subseteq E(G)$.
If $i\neq j$, then $\{v_ju_n, v_{i}v_n\}$ are two independent edges and $|I|\leq n-1$. Otherwise, $G[\{v_{i},v_n,u_n\}]$ is a triangle and $|I|\leq n-1$.

Necessity. By Lemma \ref{l4}, there exists a subset $S\subseteq V(G)$ such that $G[S]$ contains two independent edges, all components of $G-S$ are factor-critical,
and
$o(G-S)=|S|-2$. Hence $|S|\geq 4$. Let $C_i~(1\leq i\leq |S|-2)$ be all components of $G-S$. Since $F(G)=n-1$, there exists a perfect matching $M$ of $G$ so that $f(G,M)=n-1$. We claim that
$G-S$ has at most one component containing exactly three vertices, and the others are trivial. Otherwise, $G-S$ has one component, say $C_1$, containing at least five vertices or $G-S$ has two  components, say $C_1$ and $C_2$, containing exactly three vertices.
Since $G-S$ has exactly $|S|-2$ factor-critical components, in either case there is one
edge $e_1$ in $M$ belonging to $C_1$ or $C_2$. Then none of end vertices of $e_1$ are adjacent to any vertex, say $w$, of another component. So $G[V(\{e_1,e_2\})]$ cannot contain an $M$-alternating cycle, where $e_2$ is the edge in $M$ incident with $w$, which contradicting Lemma \ref{2.2}.

If one component of $G-S$ is a triangle and the other components are singletons, then $|S|=n\geq4$. Let $V(C_i)=\{v_i\}$ for each $1\leq i\leq n-3$,
$V(C_{n-2})=\{v_{n-2},v_{n-1},v_n\}$ and $u_iv_i\in M$ for each $1\leq i\leq n-3$. Since $f(G,M)=n-1$, each edge of $C_{n-2}$ does not belong to $M$ by Lemma \ref{2.2}.
So we denote the remaining three edges of $M$ by $u_{n-2}v_{n-2},u_{n-1}v_{n-1}$ and $u_nv_n$. Thus $M=\{u_iv_i|i=1,2,\dots,n\}$ and $S=\{u_1,u_2,\dots,u_n\}$. Hence (\romannumeral1) holds.

If all components of $G-S$ are trivial, then $|S|=n+1$ and $o(G-S)=n-1$. Let $V(C_i)=\{v_i\}$ and $u_iv_i\in M$ for each $1\leq i\leq n-1$. Then the remaining edge of $M$ is
denoted by $u_nv_n$. So $\{v_1,v_2,\dots,v_{n-1}\}$ is an independent set of $G$ and $S=\{u_1,u_2,\dots,u_n,v_n\}$ with $G[S]$ containing at least two independent edges. Since $G$ is not a graph in $\mathcal{K}_{n,n}^+$, $G$ contains no independent set of size $n$ by Lemma \ref{p2}. Combining that $\{v_1,v_2,\dots,v_{n-1}\}$ is an independent set of $G$, there exists $i$ and $j$ with $1\leq i,j\leq n-1$ such that $\{u_nv_j,v_nv_i\}$
is contained in $E(G)$. So (\romannumeral2) holds.
\end{proof}

Next we will determine all minimal non-2-extendable graphs in Theorem \ref{p3}.
\begin{cor}
Let graph $G$ in Theorem \ref{p3} be minimal. Then $G$ is not 2-extendable if and only if (\romannumeral2) in Theorem \ref{p3} holds where $i\neq j$,
$\{u_1,u_2,\dots,u_{n-1}\}$ is an independent set and $G[\{u_n,v_n,u_i,v_i\}]$ is just a 4-cycle for $1\leq i\leq n-1$ (see an example in Fig. \ref{p91111}(c)).
\end{cor}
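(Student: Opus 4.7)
The plan is to invoke Theorem \ref{p3} directly and then refine its conclusion under the minimality hypothesis, relying on the rigid structural characterization given by Lemma \ref{2.2}: for every pair of distinct $M$-edges $e_k=u_kv_k$, $e_l=u_lv_l$, the induced subgraph $G[\{u_k,v_k,u_l,v_l\}]$ is \emph{exactly} an $M$-alternating $4$-cycle. In particular, the two non-matching edges lie in either $\{v_kv_l,u_ku_l\}$ or $\{v_ku_l,u_kv_l\}$, and the diagonal pair not used is absent from $E(G)$. This dichotomy is the engine of the whole argument.

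\emph{Necessity.} Assume $G$ is minimal and not $2$-extendable, so by Theorem \ref{p3} either (i) or (ii) holds. I first rule out (i). In (i), $v_{n-2},v_{n-1},v_n$ form a triangle, so for each pair $k,l\in\{n-2,n-1,n\}$ the edge $v_kv_l$ is present; minimality then forces the $4$-cycle on $\{u_k,v_k,u_l,v_l\}$ to be $u_kv_kv_lu_lu_k$, giving $u_ku_l\in E(G)$ and $u_kv_l,v_ku_l\notin E(G)$. Hence $\{u_{n-2},u_{n-1},u_n\}$ spans a triangle in $G[\{u_1,\dots,u_n\}]$. On the other hand, for $i\le n-3$ and any $j\ne i$, the vertex $v_i$ is isolated in $G[\{v_1,\dots,v_n\}]$, so $v_iv_j\notin E(G)$; minimality then selects the other $4$-cycle type $u_iv_iu_jv_ju_i$, yielding $u_iu_j\notin E(G)$. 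Therefore $G[\{u_1,\dots,u_n\}]$ is exactly the triangle $u_{n-2}u_{n-1}u_n$ together with $n-3$ isolated vertices, and admits no two independent edges --- contradicting the hypothesis in (i). Hence (ii) must hold.

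Now I derive the three extra conditions. For $i,j\in\{1,\dots,n-1\}$ with $i\ne j$, the independence of $\{v_1,\dots,v_{n-1}\}$ gives $v_iv_j\notin E(G)$, so the minimal $4$-cycle is $u_iv_iu_jv_ju_i$ and $u_iu_j\notin E(G)$; thus $\{u_1,\dots,u_{n-1}\}$ is independent. Minimality itself gives that $G[\{u_n,v_n,u_i,v_i\}]$ is exactly a $4$-cycle for each $i\le n-1$. For the claim $i\ne j$: since $v_iv_n\in E(G)$, the $4$-cycle on $\{u_n,v_n,u_i,v_i\}$ must be of the type $u_nv_nv_iu_iu_n$, which also forbids $v_iu_n\in E(G)$; if we had $i=j$ then $v_iu_n=v_ju_n$ would be both in and out of $E(G)$, impossible. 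For sufficiency, assume (ii) and the three extras. I only need to verify the ``two independent edges in $G[\{u_1,\dots,u_n,v_n\}]$'' clause of Theorem \ref{p3}(ii). From $v_iv_n\in E(G)$ and the $4$-cycle on $\{u_n,v_n,u_i,v_i\}$, we get $u_iu_n\in E(G)$; from $v_ju_n\in E(G)$ we get $u_jv_n\in E(G)$. As $i\ne j$, the edges $u_iu_n$ and $u_jv_n$ are vertex-disjoint and lie inside $G[\{u_1,\dots,u_n,v_n\}]$, so (ii) of Theorem \ref{p3} holds in full, and Theorem \ref{p3} then delivers that $G$ is not $2$-extendable.

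\emph{Main obstacle.} I expect the delicate step to be the elimination of case (i): one must juggle three types of pair-analysis at once---the three pairs inside $\{n-2,n-1,n\}$, the mixed pairs with an isolated $v_i$, and the pairs entirely among isolated $v_i$'s---and combine them to show $G[\{u_1,\dots,u_n\}]$ collapses to a single triangle plus isolated vertices, which is precisely too sparse to satisfy (i). Everything else is essentially bookkeeping using the ``$4$-cycle exactly'' equivalence from Lemma \ref{2.2}.
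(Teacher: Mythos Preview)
Your proposal is correct and follows essentially the same approach as the paper: reduce via Theorem \ref{p3} to cases (i)/(ii), and use the ``exactly a $4$-cycle'' characterization of minimality from Lemma \ref{2.2} to force the structure. The only tactical difference is in eliminating case~(i): the paper picks one of the two required independent edges in $G[\{u_1,\dots,u_n\}]$ with an endpoint $u_i$, $i\le n-3$, and shows directly that $G[\{u_i,v_i,u_j,v_j\}]$ acquires a fifth edge (violating minimality), whereas you instead show that minimality forces $G[\{u_1,\dots,u_n\}]$ to be a single triangle plus isolated vertices, contradicting the ``two independent edges'' clause; both are short and equivalent in strength. For sufficiency, the paper additionally verifies that the refined~(ii) conditions themselves imply minimality (so the hypothesis is not merely assumed but recovered), while you simply invoke Theorem~\ref{p3} --- which is all that is logically required given the Corollary's standing hypothesis.
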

\begin{proof}Sufficiency. It suffices to prove that $G$ is minimal by Theorem \ref{p3}. Since $f(G,M)=n-1$,  by Lemma \ref{2.2}, $G[\{u_i,v_i,u_j,v_j\}]$ contains an $M$-alternating 4-cycle for any $1\leq i< j\leq n-1$. By the assumption, $\{u_1,u_2,\dots,u_{n-1}\}$ and $\{v_1,v_2,\dots,v_{n-1}\}$ are two independent sets. So neither $v_iv_j$ nor $u_iu_j$ is an edge of $G$. Hence $G[\{u_i,v_i,u_j,v_j\}]$ is exactly a 4-cycle. Combining that $G[\{u_n,v_n,u_i,v_i\}]$ is just a 4-cycle for $1\leq i\leq n-1$, we obtain that $G$ is minimal by Lemma \ref{2.2}.

Necessity. By Theorem \ref{p3}, $G$ has a perfect matching $M=\{u_iv_i| i=1,2,\dots,n\}$ with $f(G,M)=n-1$ so that (\romannumeral1) or (\romannumeral2)
holds. First we show that each graph $G$ satisfying (\romannumeral1) is not minimal. Let $\{v_1,v_2,\dots,v_{n-3}\}$ be the set of $n-3$ isolated vertices and $G[\{v_{n-2},v_{n-1},v_n\}]$ be the triangle of $G[\{v_{1},v_{2},\dots,v_n\}]$.
Then there is at least one vertex, say $u_i$ for some $1\leq i \leq n-3$, incident with one of the two independent edges of
$G[\{u_1,u_2,\dots,u_n\}]$. Assume that $u_iu_j$ is such an edge for some $1\leq j\leq n$. By Lemma \ref{2.2}, $G[\{u_i,v_i,u_j,v_j\}]$ contains an $M$-alternating 4-cycle. Since $v_iv_j\notin E(G)$, we have $\{u_iv_j,v_iu_j\}\subseteq E(G)$. But $u_iu_j\in E(G)$, $G[\{u_i,v_i,u_j,v_j\}]$ is not a 4-cycle.
So $G$ is not minimal by Lemma \ref{2.2}.

Next let $G$ be a graph satisfying (\romannumeral2). Since $G$ is minimal, $G[\{u_k,v_k,u_l,v_l\}]$ is just a 4-cycle for $1\leq k<l\leq n$ by Lemma \ref{2.2}.
Since
$\{v_iv_n,v_ju_n\}\subseteq E(G)$ for some $i$ and $j$ with $1\leq i,j\leq n-1$, we have $i\neq j$. Since $\{v_{1},v_{2},\dots,v_{n-1}\}$ is an independent
set, $v_kv_l\notin E(G)$ for $1\leq k< l \leq n-1$. By Lemma \ref{2.2}, $G[\{u_k,v_k,u_l,v_l\}]$ is a 4-cycle $u_kv_lu_lv_ku_k$ and $u_ku_l\notin E(G)$. Hence $\{u_1,u_2,\dots,u_{n-1}\}$ is an independent set.
\end{proof}

\section{\normalsize Minimum forcing numbers and forcing spectrum of graphs $G$ with $F(G)=n-1$}
The \emph{forcing spectrum} of a graph $G$ is the set of forcing numbers of all perfect matchings of graph $G$. If the forcing spectrum is an integer interval, then we say it is  \emph{continuous} (or \emph{ consecutive}).  Afshani et al. \cite{5} showed that any finite
subset of positive integers is the forcing spectrum of some graph. Besides, they \cite{5} obtained that the forcing spectra of column continuous subgrids are  continuous by matching 2-switches. Further, Zhang and Jiang \cite{41} generalized their result to any polyomino with perfect matchings by applying the $Z$-transformation graph. Zhang and Deng \cite{30} obtained that the forcing spectrum of any hexagonal system with a forcing edge form either the
integer interval from 1 to its Clar number or with only the gap 2.
For more researches on the forcing spectra of special graphs, see \cite{4,5,19,31,32,33,ZZ22}.

Let $G\in\mathcal{G}_{2n}$ with $F(G)=n-1$. In this section we will prove that $f(G)\geq \lfloor\frac{n}{2}\rfloor$, and find that for the class of such graphs all
minimum forcing numbers of them form an integer interval $[\lfloor\frac{n}{2}\rfloor,n-1]$. Further, we will show that the forcing spectrum of each such graph $G$ is
continuous.
Next we give a lemma obtained by Che and Chen.
\begin{lem}[\cite{10}]\label{4.1} Let $G$ be a $k$-connected graph with a perfect matching. Then $f(G)\geq \lfloor\frac{k}{2}\rfloor$.
\end{lem}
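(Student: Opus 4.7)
The plan is to argue by contraposition: assume some perfect matching $M$ of $G$ admits a forcing set $S$ with $|S| \leq \lfloor k/2 \rfloor - 1$, and show that $G$ cannot be $k$-connected. Set $H := G - V(S)$. Since $|V(S)| = 2|S| < k$, the $k$-connectedness of $G$ guarantees that $H$ is connected. By Lemma~\ref{2.1} the subgraph $H$ contains no $M$-alternating cycle; equivalently, $M \setminus S$ is the \emph{unique} perfect matching of $H$, because any second perfect matching $M'$ of $H$ would make the symmetric difference $(M \setminus S) \oplus M'$ a disjoint union of $M$-alternating cycles inside $H$.

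Next I would invoke the classical Kotzig-type theorem asserting that every connected graph with a unique perfect matching contains a bridge lying in the matching. This produces an edge $e = uv \in M \setminus S$ such that $H - e$ has exactly two components $H_u$ and $H_v$ containing $u$ and $v$ respectively.

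The final step is to extract from $e$ a vertex cut of $G$ of size strictly less than $k$. Set $X := V(S) \cup \{u\}$, so $|X| = 2|S| + 1 \leq k - 1$. In $H$, removing $u$ destroys the bridge $e$ together with every other edge incident to $u$, so $H - u = (H_u \setminus \{u\}) \cup H_v$ decomposes into two pieces without any crossing edge; provided $H_u \neq \{u\}$, this is a genuine disconnection and $X$ is a vertex cut of $G$ of size at most $k-1$, contradicting $k$-connectedness. A symmetric argument using $v$ handles the case $H_u = \{u\}$ as long as $H_v \neq \{v\}$. The only remaining possibility is that $H$ consists solely of the edge $uv$, which forces $|V(S)| = 2n - 2$ and hence $|S| = n - 1 \geq \lfloor k/2 \rfloor$ (since $k \leq 2n-1$), contradicting the standing assumption.

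The part I expect to be delicate is pinning down the right bridge-in-the-matching version of the Kotzig/Lov\'asz theorem and making the translation between ``$S$ is a forcing set of $M$'' and ``$M \setminus S$ is the unique perfect matching of $H$'' fully rigorous; once both are secured, the vertex-cut bookkeeping is brief but does require the small corner-case check noted above.
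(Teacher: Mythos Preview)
The paper does not supply its own proof of Lemma~\ref{4.1}; the result is simply quoted from Che and Chen~\cite{10}. So there is no ``paper's proof'' to compare against, and I can only assess your argument on its own merits.

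Your argument is correct. The translation from ``$S$ is a forcing set of $M$'' to ``$M\setminus S$ is the unique perfect matching of $H=G-V(S)$'' is immediate from Lemma~\ref{2.1}, since the symmetric difference of two distinct perfect matchings of $H$ is a nonempty disjoint union of $(M\setminus S)$-alternating cycles in $H$. Kotzig's theorem (every connected graph with a unique perfect matching has a bridge belonging to that matching) then furnishes the bridge $e=uv$, and your case analysis correctly extracts from it a vertex cut $V(S)\cup\{u\}$ or $V(S)\cup\{v\}$ of $G$ of size at most $2|S|+1\le 2\lfloor k/2\rfloor-1\le k-1$, with the degenerate case $|V(H)|=2$ handled exactly as you indicate. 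One can compress the endgame by observing directly that a bridge endpoint is a cut vertex of $H$ whenever $|V(H)|\ge 3$, but that is precisely what your case split establishes.
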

Combining Lemmas \ref{p1} and \ref{4.1}, we have the following result.
\begin{cor}\label{c3}Let $G\in\mathcal{G}_{2n}$ with $F(G)=n-1$. Then
$f(G)\geq \lfloor\frac{n}{2}\rfloor$.
\end{cor}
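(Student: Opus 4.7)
The statement is essentially a direct corollary combining the two preceding lemmas, so the proof plan is short. My plan is to split on whether $n=1$ or $n\geq 2$.

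First, I would dispose of the degenerate case $n=1$. Here $G \in \mathcal{G}_2$ means $G$ has two vertices and a perfect matching, forcing $G=K_2$, in which case $f(G)=0=\lfloor 1/2\rfloor$, so the inequality is trivially satisfied.

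For $n\geq 2$, the hypothesis $F(G)=n-1$ gives $F(G)\geq 1>0$, which is precisely the extra hypothesis needed to invoke Lemma \ref{p1}(ii). That lemma yields $\kappa(G)\geq n$, i.e. $G$ is $n$-connected. Since $G$ also has a perfect matching by the definition of $\mathcal{G}_{2n}$, Lemma \ref{4.1} applied with $k=n$ immediately gives
\[
f(G)\geq \left\lfloor\frac{n}{2}\right\rfloor,
\]
completing the proof.

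There is no real obstacle here; the work has already been done in Lemmas \ref{p1} and \ref{4.1}, and the corollary is just the juxtaposition of these two bounds. The only point that requires a moment of care is making sure the connectivity bound of Lemma \ref{p1}(ii) is applicable, which is why I separate the $n=1$ case (where $F(G)=n-1=0$ and Lemma \ref{p1}(ii) does not apply) from the general case. Beyond that, the proof is a one-line chain of implications.
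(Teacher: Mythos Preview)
Your proof is correct and matches the paper's approach exactly: the paper simply states that the corollary follows by combining Lemmas \ref{p1} and \ref{4.1}, which is precisely the chain of implications you describe. Your explicit handling of the degenerate case $n=1$ is a nice bit of care that the paper leaves implicit.
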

For $0\leq k\leq\lfloor\frac{n-1}{2}\rfloor$, let $H_k\in \mathcal{G}_{2n}$ be a minimal graph with $f(H_k,M_0)=n-1$, where $M_0=\{u_iv_i|i=1,2,\dots,n\}$ is a perfect
matching of $H_k$, so that $H_k[\{u_1,u_2,\dots,u_n\}]$ contains exactly $k$ edges $\{u_{2i-1}u_{2i} |i=1,2,\dots,k\}$. Then $H_k[\{v_1,v_2,\dots,v_n\}]$
contains exactly $k$ edges $\{v_{2i-1}v_{2i}| i=1,2,\dots,k\}$ and $H_k[\{u_{2k+1},v_{2k+1},\dots,u_n,v_n\}]$ is isomorphic to a complete bipartite graph (see
examples in Fig. \ref{p442}).
 \begin{figure}
\centering
\includegraphics[height=3cm,width=14.3cm]{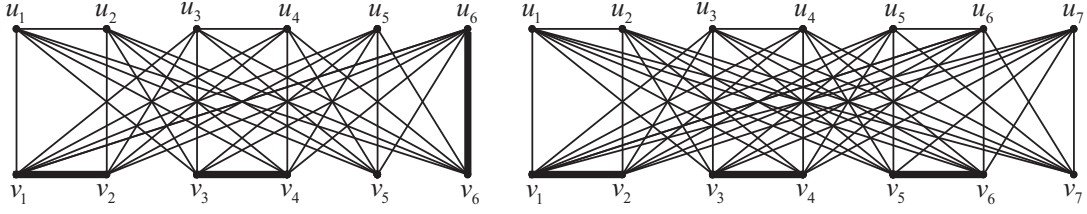}
\caption{\label{p442}$H_k$ with $k=\lfloor\frac{n-1}{2}\rfloor$ and $n=6$ and 7.}
\end{figure}
\begin{rem}\label{r5} \textrm{For} $0\leq k\leq\lfloor\frac{n-1}{2}\rfloor$, $f(H_k)\leq n-k-1.$ \textrm{Especially, for} $k=\lfloor\frac{n-1}{2}\rfloor$ \textrm{we have}
$f(H_k)=\lfloor\frac{n}{2}\rfloor$, \textrm{i.e., the lower bound of Corollary \ref{c3} is sharp.}

 \textrm{Let} $M=\{u_{2i-1}u_{2i},v_{2i-1}v_{2i} |i=1,2,\dots,k\}\cup\{u_{2k+1}v_{2k+1},u_{2k+2}v_{2k+2},\dots,u_nv_n\}$ \textrm{be a perfect matching of $H_k$. It follows that}
 $S=\{v_{2i-1}v_{2i}|i=1,2,\dots,k\}\cup\{u_{2k+2}v_{2k+2},u_{2k+3}v_{2k+3},\\ \dots,u_nv_n\}$ \textrm{is a forcing set of $M$ by Lemma \ref{2.1} (see examples in Fig.}
 \textrm{\ref{p442}, where bold lines form $S$). So} $f(H_k)\leq f(H_k,M)\leq |S|=k+(n-2k-1)=n-k-1$.

 \textrm{Especially, for} $k=\lfloor\frac{n-1}{2}\rfloor$ \textrm{we have} $f(H_k)\leq n-k-1=\lfloor\frac{n}{2}\rfloor$. \textrm{Combining Corollary \ref{c3}, we obtain that}
 $f(H_k)=\lfloor\frac{n}{2}\rfloor$.
\end{rem}

Next we will prove that $f(H_k)=n-k-1$ for $0\leq k\leq\lfloor\frac{n-1}{2}\rfloor$.
Here we give two simple facts that can be obtained from Lemma \ref{2.1}.
\begin{fact}\label{Fact 1}If $G'$ is a spanning subgraph of $G$ with $f(G',M)=|S|$ for some perfect matching $M$ of $G'$, then $f(G,M)\geq |S|$.
\end{fact}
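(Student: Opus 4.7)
The plan is to reduce the whole fact to a single application of Lemma \ref{2.1} in each direction. I would begin by choosing any minimum forcing set $T\subseteq M$ of $M$ in $G$, so that $|T|=f(G,M)$. By Lemma \ref{2.1}, $G-V(T)$ contains no $M$-alternating cycle. Since $G'$ is a spanning subgraph of $G$, we have $V(G')=V(G)$ and $E(G')\subseteq E(G)$, so $G'-V(T)$ is itself a subgraph of $G-V(T)$ on the same vertex set; consequently $G'-V(T)$ also contains no $M$-alternating cycle. Applying Lemma \ref{2.1} in the reverse direction inside $G'$, the set $T$ is a forcing set of $M$ in $G'$, and therefore $f(G',M)\leq |T|=f(G,M)$. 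Combined with the hypothesis $f(G',M)=|S|$, this yields $f(G,M)\geq |S|$, as desired.

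The only bookkeeping I would verify up front is that all objects make sense in both graphs: $M$, being a perfect matching of $G'$ with $V(G')=V(G)$ and $E(G')\subseteq E(G)$, is automatically a perfect matching of $G$; similarly $T\subseteq M\subseteq E(G')\subseteq E(G)$, so $T$ is a legitimate subset of edges in either graph. The comparison of alternating-cycle sets is trivial too: any $M$-alternating cycle of $G'-V(T)$ is visibly an $M$-alternating cycle of $G-V(T)$, since $M$-membership of an edge does not depend on which ambient graph we consider. I do not foresee any real obstacle here; the fact is simply the monotonicity of forcing numbers under edge-addition, which follows in essentially one line from Lemma \ref{2.1}.
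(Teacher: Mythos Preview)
Your argument is correct and matches the paper's intent: the paper states that Fact~\ref{Fact 1} ``can be obtained from Lemma~\ref{2.1}'' without spelling out the details, and your use of Lemma~\ref{2.1} in both directions to show that a minimum forcing set of $M$ in $G$ is also a forcing set in the spanning subgraph $G'$ is exactly the intended one-line monotonicity argument.
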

\begin{fact}\label{Fact 2}Let $M$ be a perfect matching of $G$ with $M=M_1\cup M_2$, and let $G_i=G[V(M_i)]$ for $i=1,2$. Then $f(G,M)\geq f(G_1,M_1)+f(G_2,M_2)$.
\end{fact}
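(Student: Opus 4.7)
The plan is to reduce to Lemma \ref{2.1} via a straightforward splitting argument. Let $S$ be a minimum forcing set of $M$ in $G$, so $|S|=f(G,M)$, and partition it according to the partition of $M$ by setting $S_i:=S\cap M_i$ for $i=1,2$. Then $|S|=|S_1|+|S_2|$, and the target inequality will follow at once provided I can show that each $S_i$ is a forcing set of $M_i$ in $G_i$, since that gives $|S_i|\geq f(G_i,M_i)$.

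The key geometric observation is that since $M=M_1\cup M_2$ is a matching, the two pieces are vertex-disjoint: $V(G_1)=V(M_1)$ and $V(G_2)=V(M_2)$ share no vertex. Consequently $E(G_1)\cap M=M_1$ (no edge of $G_1$ can belong to $M_2$ because $M_2$ has no endpoint in $V(G_1)$), and symmetrically for index $2$. This is the only nontrivial point in the whole proof, because it is exactly what lets $M_i$-alternation in $G_i$ coincide with $M$-alternation in $G$.

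To finish, I argue by contradiction via Lemma \ref{2.1}. Suppose $S_1$ fails to force $M_1$ in $G_1$; then $G_1-V(S_1)$ contains an $M_1$-alternating cycle $C$. Its edges lie in $E(G_1)\subseteq E(G)$, and by the observation above they alternate between $M_1\subseteq M$ and $E(G_1)\setminus M_1\subseteq E(G)\setminus M$, so $C$ is $M$-alternating in $G$. Moreover $V(C)\subseteq V(G_1)$ is disjoint from $V(S_2)\subseteq V(G_2)$, hence $V(C)\cap V(S)=V(C)\cap V(S_1)=\emptyset$, so $C$ lies in $G-V(S)$. This contradicts Lemma \ref{2.1} applied to the forcing set $S$. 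The same argument (swap indices) handles $S_2$, and adding $|S_1|\geq f(G_1,M_1)$ and $|S_2|\geq f(G_2,M_2)$ yields the claim.

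There is no real obstacle to overcome; the fact is a bookkeeping consequence of Lemma \ref{2.1}, and the only subtlety worth isolating in writing is the identity $E(G_i)\cap M=M_i$ that transports alternating structure between $G_i$ and $G$.
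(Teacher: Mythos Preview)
Your argument is correct and matches the paper's intent: the paper does not spell out a proof of Fact~\ref{Fact 2} but simply records it as a consequence of Lemma~\ref{2.1}, which is exactly the route you take. The splitting $S_i=S\cap M_i$ and the use of vertex-disjointness of $V(M_1)$ and $V(M_2)$ to transport an $M_i$-alternating cycle to an $M$-alternating cycle avoiding $V(S)$ is the natural (and only reasonable) way to carry this out.
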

\begin{lem}\label{l5} For $0\leq k\leq \lfloor\frac{n-1}{2}\rfloor$, $f(H_k)= n-k-1$.
\end{lem}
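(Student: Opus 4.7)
The upper bound $f(H_k) \leq n-k-1$ is already in Remark~\ref{r5}, so my plan focuses on the lower bound $f(H_k) \geq n-k-1$. By Lemma~\ref{2.1}, this is equivalent to showing: for every perfect matching $M$ of $H_k$ and every $T \subseteq M$ with $|T| = k+2$, the induced subgraph $H_k[V(T)]$ contains an $M$-alternating cycle. Fix such $M$ and $T$, and classify the edges of $T$ into $a$ UU-edges $u_{2l-1}u_{2l}$, $b$ VV-edges $v_{2l-1}v_{2l}$, and $c$ UV-edges $u_iv_j$, so $a+b+c=k+2$ with $a,b \leq k$ and hence $c \geq 2$. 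When $a \geq 1$ and $b \geq 1$, I would pick $u_{2l-1}u_{2l}, v_{2m-1}v_{2m} \in T$: for $l=m$, the 4-cycle $u_{2l-1}v_{2l-1}v_{2l}u_{2l}$ alternates between the $T$-edges $u_{2l-1}u_{2l}, v_{2l-1}v_{2l}$ and the non-$T$ edges $u_{2l-1}v_{2l-1}, u_{2l}v_{2l}$; for $l \neq m$, the 4-cycle $u_{2l-1}u_{2l}v_{2m-1}v_{2m}$ works since $\{2l,2m-1\}$ and $\{2m,2l-1\}$ are not color pairs. By the $u \leftrightarrow v$ symmetry of $H_k$, I can therefore assume $b=0$.

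The main step is a directed-graph argument. Let $A \subseteq \{1,\ldots,k\}$ index the UU-edges of $T$ (so $|A|=a$), set $I_A = \bigcup_{l \in A}\{2l-1, 2l\}$, and let $I_0$ and $J$ be the $u$- and $v$-indices of the UV-edges of $T$; then $I_0 \cap I_A = \emptyset$, $|I_0|=|J|=c$, and there is a bijection $\sigma: I_0 \to J$ so that the UV-edges are $\{u_iv_{\sigma(i)}: i \in I_0\}$. Form a directed graph $D$ on $I_0$ with arc $i \to i'$ (for $i \neq i'$) whenever $u_{i'}v_{\sigma(i)} \in E(H_k)$. Any directed cycle $i_1 \to \cdots \to i_r \to i_1$ in $D$ lifts to the $M$-alternating cycle $u_{i_1}v_{\sigma(i_1)}u_{i_2}\cdots u_{i_r}v_{\sigma(i_r)}u_{i_1}$ of length $2r$ in $H_k[V(T)]$. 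A missing arc $i \to i'$ forces $\{i', \sigma(i)\}$ to be a color pair $\{2l-1, 2l\}$; since for $l \in A$ both $2l-1, 2l \in I_A$ lie outside $I_0$, only the $k-a$ pairs $l \notin A$ contribute, each with at most 2 missing arcs (one per orientation). Thus $|E(D)| \geq c(c-1) - 2(k-a)$, and $D$ must contain a directed cycle whenever $c(c-1) - 2(k-a) > \binom{c}{2} = c(c-1)/2$, i.e., whenever $c(c-1) > 4(k-a)$. Setting $m := k-a \geq 0$ and using $c = m+2$, this reduces to $(m+2)(m+1) > 4m$, i.e., $m^2 - m + 2 > 0$, which holds for every $m \geq 0$.

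The key difficulty — and where the argument is tight — is the refinement that bounds missing arcs by $2(k-a)$ rather than the naive $2k$. The disjointness $I_0 \cap I_A = \emptyset$ is what removes the color pairs in $A$ from contributing missing arcs, and without this refinement the required inequality $c(c-1) > 4k$ would fail exactly when $c$ is small (i.e., $a$ is close to $k$), making the argument circular. One subtlety worth verifying carefully in Step~1 is that the $l \neq m$ case really rules out every color pair collision, and in the main step that $i \neq i'$ for each missing arc (which follows because $i = \sigma^{-1}(\sigma(i))$ cannot equal $i'$ without forcing $T$ to contain the forbidden edge $u_{2l-1}v_{2l}$).
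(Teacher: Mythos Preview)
Your approach is correct and genuinely different from the paper's. The paper proves the lower bound by induction on $n$: it handles the boundary cases $k=0$ and $k=\lfloor (n-1)/2 \rfloor$ separately, and for $1 \leq k \leq (n-3)/2$ runs a fairly intricate case analysis on the structure of $M$ (whether $M$ contains some $u_iv_i$ with $i \leq 2k$, whether an $M$-alternating 4-cycle uses two of the special edges $u_{2i-1}u_{2i}$, $v_{2i-1}v_{2i}$, and then a further subdivision tracking how $M$ threads through the paired indices), each time peeling off a small piece and invoking the induction hypothesis via Facts~\ref{Fact 1} and~\ref{Fact 2}. Your argument is direct: you reformulate the lower bound as ``every $(k{+}2)$-subset $T\subseteq M$ spans an $M$-alternating cycle'', dispatch the case $a,b \geq 1$ by an explicit 4-cycle, and in the remaining case $b=0$ run a counting argument on the auxiliary digraph $D$. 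The key quantitative point --- that at most $2(k-a)$ arcs of $D$ can be missing, because color pairs indexed by $A$ cannot obstruct (as $I_0\cap I_A=\emptyset$) --- is exactly what makes the inequality $(m+2)(m+1)>4m$ go through, and it replaces all of the paper's inductive bookkeeping. Your route is shorter and more conceptual; the paper's route stays closer to the explicit 4-cycle structure of minimal graphs and reuses the machinery of Facts~\ref{Fact 1} and~\ref{Fact 2} already set up in the section.

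One small slip to fix in the write-up: the claim ``hence $c \geq 2$'' does not follow from $a,b \leq k$ alone (for instance $a=b=k=2$ gives $c=0$, and this configuration does occur in a perfect matching of $H_2$); it only becomes valid after you have reduced to $b=0$, where $c = k+2-a \geq 2$. This does not affect correctness, since the case $a,b \geq 1$ is handled before the digraph step anyway.
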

\begin{proof}By Remark \ref{r5},
it suffices to prove that $f(H_k)\geq n-k-1$. We proceed by induction on $n$. It is trivial for $n=1$. Since $H_0\cong K_{n,n}$, we have $f(H_0)=n-1$ by Theorem \ref{thm1.3}. By Remark
\ref{r5}, for  $k=\lfloor\frac{n-1}{2}\rfloor$ we have $f(H_k)= \lfloor\frac{n}{2}\rfloor =n-k-1$. So next we suppose  $n\geq 2$ and $1\leq k\leq \frac{n-3}{2}$. Let
$M$ be any perfect matching of $H_k$. Then we have the following claims.

{\textbf{Claim 1.} If $u_iv_i\in M$ for some $1\leq i\leq 2k$, then $f(H_k,M)\geq n-k-1$.}

Let $H^2=H_k-\{u_i,v_i\}$ and $M_2=M\setminus \{u_iv_i\}$. Then $H^2$ is isomorphic to $H'_{k'}$ with $n'=n-1$ and $k'=k-1$. By the induction hypothesis,
$$f(H^2,M_2)=f(H'_{k'},M_2)\geq f(H'_{k'})\geq n'-k'-1=n-k-1.$$ By Fact \ref{Fact 2}, $f(H_k,M)\geq f(H^2,M_2)\geq  n-k-1$.

{\textbf{Claim 2.} If $G$ has an $M$-alternating 4-cycle containing exactly two edges of $\{u_{2i-1}u_{2i},\\v_{2i-1}v_{2i} |i=1,2,\dots,k\}$, then $f(H_k,M)\geq
n-k-1$.}

If $M\cap \{u_{2i-1}u_{2i},v_{2i-1}v_{2i} |i=1,2,\dots,k\}=\emptyset$, then we assume that $C$ is an $M$-alternating 4-cycle of $G$, and $\{u_{2i-1}u_{2i},v_{2j-1}v_{2j}\}$ is contained in $C$ for some $1\leq i,j\leq k$. So $C=u_{2i-1}v_{2j-1}v_{2j}u_{2i}u_{2i-1}$ or $u_{2i-1}v_{2j}v_{2j-1}u_{2i}u_{2i-1}$. Let $H^2=H_k-V(C)$
and $M_2=M\cap E(H^2)$, $M_1=M\setminus M_2$, $H^1=H_k[V(M_1)]$. Then $H^1$ contains $C$ and $f(H^1,M_1)\geq 1$ by Theorem \ref{thm6}. On the other hand,
$H^2-\{v_{2i-1}v_{2i},u_{2j-1}u_{2j}\}$ is isomorphic to $H'_{k'}$ with $n'=n-2$, $k'\leq k-1$, and $M_2$ is a perfect matching of $H'_{k'}$. By the induction
hypothesis and Fact \ref{Fact 1}, $$f(H^2,M_2)\geq f(H'_{k'},M_2)\geq f(H_{k'})\geq n'-k'-1\geq n-k-2.$$ By Fact \ref{Fact 2}, $f(H_k,M)\geq f(H^1,M_1)+f(H^2,M_2)\geq
1+(n-k-2)= n-k-1.$

If $M\cap \{u_{2i-1}u_{2i},v_{2i-1}v_{2i} |i=1,2,\dots,k\}\neq \emptyset$, then the intersection of the two sets is denoted by $M_1$. By the structure of $H_k$,
all vertices of $\{u_1,u_2,\dots,u_n\}\setminus V(M_1)$ must match into all vertices of $\{v_1,v_2,\dots,v_n\}\setminus V(M_1)$. So
we have  $$|M_1\cap\{u_{2j-1}u_{2j}|j=1,2,\dots,k\}|=|M_1\cap\{v_{2j-1}v_{2j}|j=1,2,\dots,k\}|.$$ Let $M_2=M\setminus M_1$ and $H^i=H_k[V(M_i)]$ for $i=1$, 2.
For any pair of edges $\{u_{2i-1}u_{2i},v_{2j-1}v_{2j}\}$ of $M_1$ where $1\leq i,j\leq k$, $G[\{u_{2i-1},u_{2i},v_{2j-1},v_{2j}\}]$ is either a 4-cycle or $K_4$ by the structure of $H_k$. So $G[\{u_{2i-1},u_{2i},v_{2j-1},v_{2j}\}]$ contains an $M_1$-alternating 4-cycle, and so $H^1$ contains $\frac{|M_1|}{2}$ disjoint $M_1$-alternating 4-cycles. By Lemma  \ref{2.1}, $f(H^1,M_1)\geq\frac{|M_1|}{2}$. On the other hand,
$$H^2-\{u_{2j-1}u_{2j}|v_{2j-1}v_{2j}\in M_1, j =1,2,\dots,k\}\cup \{v_{2j-1}v_{2j}|u_{2j-1}u_{2j}\in M_1, j=1,2,\dots,k\}$$ is isomorphic to $H'_{k'}$ with
$n'=n-|M_1|,~k'\leq k-\frac{|M_1|}{2}$. Since $$k'\leq k-\frac{|M_1|}{2}\leq \frac{n-3-|M_1|}{2}<
\frac{n-2-|M_1|}{2}\leq\lfloor\frac{n-1-|M_1|}{2}\rfloor=\lfloor\frac{n'-1}{2}\rfloor,$$ by the induction hypothesis and Fact \ref{Fact 1}, we obtain that
$$f(H^2,M_2)\geq f(H'_{k'},M_2)\geq f(H'_{k'})\geq n'-k'-1\geq n-k-1-\frac{|M_1|}{2}.$$ By Fact \ref{Fact 2}, we obtain that $$f(H_{k},M)\geq
f(H^1,M_1)+f(H^2,M_2)\geq \frac{|M_1|}{2}+(n-k-1-\frac{|M_1|}{2})=n-k-1.$$

By Claims 1 and 2, from now on we suppose that $M$ contains no $u_iv_i$ for any $1\leq i\leq2k$ and any $M$-alternating 4-cycle of $G$ contains at most one edge of
$\{u_{2i-1}u_{2i},v_{2i-1}v_{2i} |i=1,2,\dots,k\}$. Particularly, $M$ contains no edges of $\{u_{2i-1}u_{2i},v_{2i-1}v_{2i} |i=1,2,\dots,k\}$. So we may assume
$\{v_1u_3,v_2u_5\}$ is contained in $M$.
Next we are going to consider the edge of $M$ incident with $v_{3}$, say $v_3u_l$. Then $l$ has four possible values: $l\leq 2$, $l=6$, $7\leq l\leq 2k$ and $l\geq
2k+1$. Furthermore, if $7\leq l\leq 2k$, then we suppose $l=7$ and continue to consider the edge of $M$ incident with $v_{5}$. Until we obtain an edge $v_hu_l \in M$
so that one of the other three cases ($l\leq h-1$, $l=h+3$, $l\geq 2k+1$) holds (since $H_k$ is finite, such edge exists).
\begin{figure}
\centering
\includegraphics[height=6cm,width=14cm]{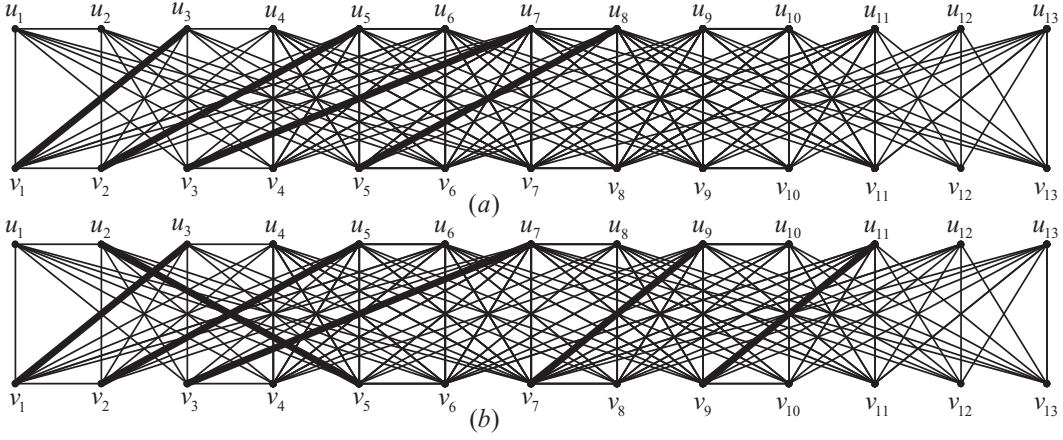}
\caption{\label{p511}Illustration for the proof of Lemma \ref{l5}, where bold lines form $M_1$.}
\end{figure}

If $l=h+3$, then let $M_1=\{v_1u_3,v_2u_5,\dots, v_hu_{h+3}\}$ (see an example in Fig. \ref{p511}(a) where $h=5$), $M_2=M\setminus M_1$, and $H^i=H_k[V(M_i)]$ for
$i=1, 2$. Then $H^1$ contains an $M_1$-alternating 4-cycle $v_1u_5v_2u_3v_1$. By Theorem \ref{thm6}, $f(H^1,M_1)\geq 1$. On the other hand,
$H^2-\{u_{1}u_{2},v_{h+2}v_{h+3}\}$ is isomorphic to $H'_{k'}$ with $n'=n-|M_1|$. Since contribution of the edges $v_1u_3$, $v_hu_{h+3}$ to $k'$ is -2, 0, and that of
each other edges in $M_1$ is -1, we have $k'=k-|M_1|$.
Since $$k'= k-|M_1|\leq \frac{n-3-2|M_1|}{2}< \frac{n-2-|M_1|}{2}\leq\lfloor\frac{n-1-|M_1|}{2}\rfloor=\lfloor\frac{n'-1}{2}\rfloor,$$ by the induction hypothesis
and Fact \ref{Fact 1}, $$f(H^2,M_2)\geq f(H'_{k'},M_2)\geq f(H'_{k'})\geq n'-k'-1=n-k-1.$$ By Fact \ref{Fact 2}, $f(H_k,M)\geq f(H^1,M_1)+f(H^2,M_2)\geq 1+n-k-1=
n-k$.

If $l\in [2k+1,n]\cup[1,h-1]$ (see an example in Fig. \ref{p511}(b) where $h=5,~l=2)$, then we continue to consider the edge of $M$ incident with $v_{h+2}$,
say $v_{h+2}u_i$. Then $i$ has three possible values: $i\leq h+1$, $h+4\leq i\leq 2k$ and $i\geq 2k+1$. Furthermore, if $h+4\leq i\leq 2k$, then we suppose $i=h+4$
and continue to consider the edge of $M$ incident with $v_{h+4}$. Until we obtain an edge of $M$, say $v_{r}u_t$ with $t\in [2k+1,n]\cup [1,r-1]$ (By the finiteness
of $G$, such edge exists) (see an example in Fig. \ref{p511}(b) where $r=9,~t=11)$. Let $M_1=\{v_1u_3,v_2u_5,\dots,v_hu_l,\dots,v_{r}u_t\}$, $M_2=M\setminus M_1$, and
$H^i=H_k[V(M_i)]$ for $i=1, 2$. Then
$H^2-u_{1}u_{2}$ is isomorphic to $H'_{k'}$ with $n'=n-|M_1|$. Since contribution of the edges $v_1u_3$, $v_hu_l$, $v_ru_t$ to $k'$ is -2, 0, 0 and that of each other
edges in $M_1$ is -1, we have $k'=k-(|M_1|-1)$. Since $$k'= k-|M_1|+1\leq \frac{n-1-2|M_1|}{2}\leq
\frac{n-2-|M_1|}{2}\leq\lfloor\frac{n-1-|M_1|}{2}\rfloor=\lfloor\frac{n'-1}{2}\rfloor,$$ by the induction hypothesis and Fact \ref{Fact 1}, $$f(H^2,M_2)\geq
f(H'_{k'},M_2)\geq f(H'_{k'})\geq n'-k'-1=n-k-2.$$ By Fact \ref{Fact 2}, $f(H_k,M)\geq f(H^1,M_1)+f(H^2,M_2)\geq 1+(n-k-2)= n-k-1$.

By the arbitrariness of $M$, we obtain that $f(H_k)\geq n-k-1$.
 \end{proof}

Combining Lemma \ref{l5} and Corollary \ref{c3}, we obtain the following result.
\begin{thm} All minimum forcing numbers of graphs $G\in \mathcal{G}_{2n}$ with $F(G)=n-1$ form an integer interval $[\lfloor\frac{n}{2}\rfloor,n-1]$.
\end{thm}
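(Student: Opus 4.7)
The plan is to combine the two ingredients that have already been established: Corollary~\ref{c3} for the lower bound and Lemma~\ref{l5} for the realizability of every value in between. Since $f(G)\leq F(G)=n-1$ gives the upper end trivially, and Corollary~\ref{c3} gives $f(G)\geq \lfloor n/2 \rfloor$, the only real content is to exhibit, for every integer $t$ with $\lfloor n/2 \rfloor \leq t \leq n-1$, a graph in $\mathcal{G}_{2n}$ whose maximum forcing number is $n-1$ and whose minimum forcing number equals $t$.

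First I would fix any such integer $t$ and set $k = n-1-t$. Then $k$ ranges over $0,1,\dots,\lfloor(n-1)/2\rfloor$ as $t$ ranges over the interval $[\lfloor n/2\rfloor, n-1]$; one checks the endpoints directly, namely $t=n-1$ corresponds to $k=0$, and $t=\lfloor n/2\rfloor$ corresponds to $k=\lfloor (n-1)/2\rfloor$ (using the parity split $n=2m$ or $n=2m+1$). Next I would take the graph $H_k$ introduced just before Remark~\ref{r5}. By its definition $H_k$ is a minimal graph of $\mathcal{G}_{2n}$ admitting a perfect matching $M_0$ with $f(H_k, M_0)=n-1$, so Observation~\ref{obs}(i) immediately yields $F(H_k)=n-1$, placing $H_k$ in the class under consideration.

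Finally I would invoke Lemma~\ref{l5} to conclude $f(H_k)=n-k-1=t$. Together with Corollary~\ref{c3}, this shows that the set of minimum forcing numbers of graphs in the class is exactly $\{\lfloor n/2\rfloor,\lfloor n/2\rfloor+1,\dots,n-1\}$.

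I do not expect any real obstacle: the heavy lifting has been done in Lemma~\ref{l5} (realizability at each value) and Corollary~\ref{c3} (the lower bound), so the theorem follows from assembling these pieces. The only care needed is verifying that as $k$ runs over $\{0,1,\dots,\lfloor(n-1)/2\rfloor\}$ the expression $n-k-1$ hits every integer of $[\lfloor n/2\rfloor, n-1]$ exactly once, which is an elementary computation.
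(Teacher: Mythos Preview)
Your proposal is correct and follows essentially the same approach as the paper, which simply states that the theorem is obtained by combining Lemma~\ref{l5} and Corollary~\ref{c3}. You have merely made explicit the bijection $t\mapsto k=n-1-t$ between $[\lfloor n/2\rfloor,n-1]$ and $\{0,1,\dots,\lfloor(n-1)/2\rfloor\}$ and the verification that $F(H_k)=n-1$, which the paper leaves implicit.
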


Suppose that $M$ is a perfect matching of $G$. If $C$ is an $M$-alternating cycle of length 4, then $M\oplus E(C)$ is a \emph{matching 2-switch} on $M$.
Afshani et al. \cite{5} obtained that a matching 2-switch on a perfect matching does not change the forcing number by more than 1.
\begin{lem}[\cite{5}]\label{4.3} If $M$ is a perfect matching of $G$ and $C$ is an $M$-alternating cycle of length 4, then $$|f(G,M\oplus E(C))-f(G,M)|\leq 1.$$
\end{lem}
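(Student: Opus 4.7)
The plan is to prove the result by a direct case analysis on how a minimum forcing set of $M$ interacts with the length-4 alternating cycle $C$. By symmetry between $M$ and $M' := M \oplus E(C)$ (note that $M = M' \oplus E(C)$ as well, and $C$ is also $M'$-alternating), it suffices to construct, from any minimum forcing set $S$ of $M$, a forcing set $S'$ of $M'$ with $|S'| \le |S| + 1$. Write $C = v_1 v_2 v_3 v_4 v_1$ with $e_1 = v_1 v_2$ and $e_2 = v_3 v_4$ in $M$, and $f_1 = v_2 v_3$, $f_2 = v_4 v_1$ in $M'$.

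The key observation — and the reason the bound is only $+1$ rather than $+2$ — is that one cannot have $\{e_1, e_2\} \cap S = \emptyset$: indeed, otherwise $V(C) \cap V(S) = \emptyset$, so $C$ survives in $G - V(S)$ as an $M$-alternating cycle, contradicting Lemma \ref{2.1} applied to the forcing set $S$. Thus $S$ meets $\{e_1, e_2\}$, and we split into two cases.

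In the first case, $\{e_1, e_2\} \subseteq S$, and I would set $S' := (S \setminus \{e_1, e_2\}) \cup \{f_1, f_2\} \subseteq M'$. Because $V(f_1) \cup V(f_2) = V(C) = V(e_1) \cup V(e_2) \subseteq V(S)$, we get $V(S') = V(S)$, so $G - V(S')$ coincides with $G - V(S)$; moreover, all edges of $C$ have been removed, so $M$ and $M'$ agree on $G - V(S')$, showing $G - V(S')$ contains no $M'$-alternating cycle. Hence $S'$ is a forcing set of $M'$ of size $|S|$. In the second case, exactly one of $e_1, e_2$ lies in $S$, say $e_1 \in S$, $e_2 \notin S$; now set $S' := (S \setminus \{e_1\}) \cup \{f_1, f_2\} \subseteq M'$, which has size $|S| + 1$. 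Here $V(S') = V(S) \cup V(e_2) \supseteq V(S)$, so $G - V(S')$ is a subgraph of $G - V(S)$ and in particular contains no $M$-alternating cycle; since $V(C) \subseteq V(S')$, the matchings $M$ and $M'$ again agree on $G - V(S')$, so no $M'$-alternating cycle can appear either, and $S'$ is a forcing set of $M'$.

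Combining the two cases gives $f(G, M') \le |S'| \le f(G, M) + 1$, and the symmetric argument yields $f(G, M) \le f(G, M') + 1$, completing the proof. There is no real obstacle beyond being careful with the vertex-set bookkeeping: the tripartite bound $|S'| \le |S| + 1$ comes exactly from the fact that $S$ must touch at least one of the two $M$-edges of $C$, so at most one new edge of $M'$ needs to be paid for when exchanging $M$-edges for $M'$-edges within $C$.
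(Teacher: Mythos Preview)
Your proof is correct. The paper does not actually supply a proof of this lemma; it is quoted from Afshani, Hatami, and Mahmoodian \cite{5} and used as a black box, so there is no in-paper argument to compare against. Your argument is the standard one: the crucial point is that a minimum forcing set $S$ of $M$ must meet $\{e_1,e_2\}$ (else $C$ survives in $G-V(S)$, contradicting Lemma~\ref{2.1}), and from there the swap $e_i \mapsto f_j$ inside $C$ costs at most one extra edge. The vertex-set bookkeeping you give (that $V(S')\supseteq V(S)$ in both cases and $V(C)\subseteq V(S')$, so $M$ and $M'$ coincide on $G-V(S')$) is exactly what is needed to invoke Lemma~\ref{2.1} for $M'$.
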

By Lemma \ref{4.3}, if $M_1,M_2,\dots,M_s$ is a sequence of perfect matchings such that $M_{i+1}$ is obtained from $M_i$ by a matching 2-switch for $1\leq i\leq s-1$,
then the integer interval [min$\{f(G,M_1), f(G,M_s)\}$,
max$\{f(G,M_1),f(G,M_s)\}]$ is contained in the forcing spectrum of $G$.
\begin{thm}\label{thm2}If $G\in \mathcal{G}_{2n}$ with $F(G)=n-1$, then its forcing spectrum is continuous.
\end{thm}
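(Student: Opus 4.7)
The plan is to reduce continuity of the spectrum to proving that the perfect matchings of $G$ form a single connected component in the graph whose edges are matching 2-switches. Granted such 2-switch connectivity, pick $M_1, M_2$ with $f(G, M_1) = f(G)$ and $f(G, M_2) = F(G) = n-1$, and take any 2-switch sequence $M_1 = N_0, N_1, \ldots, N_t = M_2$. By Lemma \ref{4.3}, $|f(G, N_{i+1}) - f(G, N_i)| \leq 1$, so the integer sequence $f(G, N_0), \ldots, f(G, N_t)$ hits every integer in $[f(G), n-1]$. Combined with Corollary \ref{c3}, the forcing spectrum is then exactly the integer interval $[f(G), n-1]$, hence continuous.

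To prove 2-switch connectivity, fix any $M^*$ with $f(G, M^*) = n-1$, which exists by Observation \ref{obs} since $F(G) = n-1$. I argue by induction on $|M \oplus M^*|$ that every perfect matching $M$ is 2-switch equivalent to $M^*$. The base case $M = M^*$ is immediate. Otherwise, decompose $M \oplus M^*$ into vertex-disjoint alternating cycles and pick one cycle $C = x_1 y_1 x_2 y_2 \cdots x_k y_k x_1$ with $x_i y_i \in M^*$ and $y_i x_{i+1} \in M$ (indices mod $k$). If $k = 2$, then $C$ itself is an $M$-alternating 4-cycle, and flipping it strictly reduces $|M \oplus M^*|$.

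The core case is $k \geq 3$. Applying Lemma \ref{2.2} to the $M^*$-edge pair $(x_1 y_1, x_3 y_3)$, the subgraph $G[\{x_1, y_1, x_3, y_3\}]$ contains an $M^*$-alternating 4-cycle whose non-$M^*$ edges form either the crossing pair $\{y_1 x_3, x_1 y_3\}$ or the parallel pair $\{y_1 y_3, x_1 x_3\}$. In the crossing case, the 4-cycle $y_1 x_2 y_2 x_3 y_1$ is $M$-alternating: the $M$-edges $y_1 x_2, y_2 x_3$ lie on $C$, while $x_2 y_2 \in M^* \setminus M$ and $x_3 y_1 \notin M$ (since the $M$-partner of $y_1$ is $x_2$). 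Flipping it yields $M'$ with $x_2 y_2 \in M' \cap M^*$, reducing the alternating cycle to length $2(k-1)$ in $M' \oplus M^*$ and permitting the induction hypothesis.

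The principal obstacle is the parallel-only case. Here I would additionally invoke Lemma \ref{2.2} on consecutive pairs $(x_i y_i, x_{i+1} y_{i+1})$: since $y_i x_{i+1} \in M \subseteq E(G)$, the lemma forces either the crossing edge $x_i y_{i+1} \in E(G)$ or the parallel edges $\{x_i x_{i+1}, y_i y_{i+1}\} \subseteq E(G)$. When two consecutive pairs on $C$ both yield parallel edges, the 4-cycle $y_1 x_2 x_3 y_2 y_1$ (built from the $M$-edges $y_1 x_2, y_2 x_3$ and the parallel edges $x_2 x_3, y_1 y_2$) is $M$-alternating; flipping it reorients $C$ without shortening, but a subsequent 2-switch on a 4-cycle exposed by the new matching (using the parallel pair $\{y_1 y_3, x_1 x_3\}$ together with a newly created $M$-edge) incorporates an $M^*$-edge into the matching and strictly reduces $|M \oplus M^*|$, letting the induction proceed. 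A finite case analysis—cleanest to track for $k = 3$, where three flips eliminate $C$ entirely—handles the remaining sub-cases, closes the induction, and establishes 2-switch connectivity.
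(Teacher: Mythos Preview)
Your overall strategy matches the paper's: reduce continuity of the spectrum to 2-switch connectivity of the perfect matchings, then invoke Lemma~\ref{4.3}. Your induction on $|M\oplus M^*|$ differs from the paper's induction on $n$, but is in principle a valid framework, and your crossing case (using the edge $x_3y_1$ to build the $M$-alternating 4-cycle $y_1x_2y_2x_3y_1$) is handled correctly.

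The genuine gap is in the parallel case. You correctly flag it as the principal obstacle, but you do not resolve it. The phrase ``a subsequent 2-switch on a 4-cycle exposed by the new matching (using the parallel pair $\{y_1y_3,x_1x_3\}$ together with a newly created $M$-edge)'' does not name the 4-cycle, and for $k\ge 4$ it is not clear one exists from the edges you have deduced. After your first flip the new $M'$-edges on $V(C)$ are $x_2x_3$ and $y_1y_2$; a second flip through $x_1x_3$ or $y_1y_3$ would require a 4-cycle using two $M'$-edges, but the $M'$-partners of $x_1,x_3,y_1,y_3$ are $y_k,x_2,y_2,x_4$ respectively, and the needed closing edge (say $x_2y_k$ or $x_4y_2$) is nowhere established. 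You also treat only the sub-case where \emph{both} consecutive pairs $(1,2)$ and $(2,3)$ yield parallel edges; the mixed sub-cases are not addressed at all. The closing sentence ``a finite case analysis \ldots\ handles the remaining sub-cases'' is an assertion, not an argument; for $k\ge 4$ you give no indication how the analysis proceeds or why it terminates.

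The paper avoids this difficulty by inducting on $n$ rather than on $|M\oplus M^*|$: its Claim~1 (if $M\cap M_s\neq\emptyset$, delete a common edge and apply the hypothesis to $G-\{u_i,v_i\}$, which still satisfies $f(G',M_s')=n-2$ by Lemma~\ref{2.2}) reduces the problem to producing, via one or two explicit 2-switches, a matching sharing at least one edge with $M_s$, or whose symmetric difference with $M_s$ contains a 4-cycle. That weaker target makes the residual case analysis short, and the paper carries it out in full. If you want to keep your induction on $|M\oplus M^*|$, you must supply the missing cases explicitly; otherwise, switching to the paper's induction on $n$ is the cleanest fix.
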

\begin{proof}Let $M_s=\{u_iv_i|i=1,2,\dots,n\}$ be a perfect matching of $G$ with $f(G,M_s)=n-1$. Then we will prove that $M_s$ can be obtained from any perfect
matching $M$ of $G$ by repeatedly applying matching 2-switches. If we have done, then $M_s$ can be obtained from a perfect matching of $G$ with the minimum forcing
number by repeatedly applying matching 2-switches. So the forcing spectrum of $G$ is continuous by Lemma \ref{4.3}.

We proceed by induction on $n$. For $n=1$, $G\cong K_2$ and the result is trivial. Next, for $n\geq 2$, $f(G, M_s)=n-1>0$. Take any perfect matching $M$  of $G$ different from $M_s$. Then we have the following claims.

{\textbf{Claim 1.} If $M_s\cap M\neq\emptyset$, then $M_s$ can be obtained from $M$ by repeatedly applying matching 2-switches.}

Suppose $M_s\cap M$ contains an edge $u_iv_i$ for some $1\leq i\leq n$.
Let $G'=G-\{u_i,v_i\}$ and $M'=M\setminus\{u_iv_i\}$, $M_s'=M_s\setminus\{u_iv_i\}$. Then $G'$ has $2(n-1)$ vertices, $M'$ and $M_s'$ are two distinct perfect matchings of $G'$. By Lemma
\ref{2.2}, $f(G',M_s')=n-2$.
By the induction hypothesis, $M_s'$ can be obtained from $M'$ by repeatedly applying matching 2-switches. Hence, $M_s$ is also obtained from $M$ by the same series of
matching 2-switches, and the claim holds.

{\textbf{Claim 2.} If $M_s\oplus M$ contains a cycle of length 4, then $M_s$ can be obtained from $M$ by repeatedly applying matching 2-switches.}

Assume that $C$ is a cycle of length 4 contained in $M_s\oplus M$ and $V(C)=\{u_i,v_i,u_j,v_j\}$. Let $M'=M\oplus E(C)$. Then $\{u_iv_i,u_jv_j\}\subseteq M_s\cap M'$. By
Claim  1, $M_s$ can be obtained from $M'$ by repeatedly applying matching 2-switches.
Hence $M_s$ can be obtained from $M$ by repeatedly applying matching 2-switches, and the claim holds.

By Claims 1 and 2, from now on we suppose that $M_s\cap M=\emptyset$ and $M_s\oplus M$ contains no cycles of length 4. Then we may suppose that $v_1u_2\in M$.
So $u_1v_2\notin M$. Without loss of generality, we assume that $u_1v_3\in M$. If $u_2v_3$ is an edge of $G$, then $C_1=u_1v_1u_2v_3u_1$ is an $M$-alternating 4-cycle. So  $M'=M\oplus E(C_1)$ is a perfect matching of $G$ and $M_s\cap M'=\{u_1v_1\}$. From Claim 1, we are done. Otherwise, $u_2v_3$ is not an edge of $G$. By Lemma \ref{2.2}, $\{u_2u_3,v_2v_3\}\subseteq E(G)$ (see Fig. \ref{p91}(a)). Next we consider the following two cases according as whether $v_2u_3$ belongs to $M$ or not.

\textbf{Case 1.} $v_2u_3\in M$. If $v_1u_3\in E(G)$, then $C_2=v_1u_2v_2u_3v_1$ is an $M$-alternating 4-cycle. So $M'=M\oplus E(C_2)$ is a perfect matching of $G$ and $M_s\cap M'=\{u_2v_2\}$. From Claim 1, we are done. Otherwise, $v_1u_3\notin E(G)$. By Lemma \ref{2.2}, $\{u_1u_3,v_1v_3\}\subseteq E(G)$ (see Fig. \ref{p91}(b)).
Then $C_3=u_1u_3v_2v_3u_1$ is an $M$-alternating 4-cycle and $C_4=v_2v_3v_1u_2v_2$ is an $M\oplus E(C_3)$-alternating 4-cycle. So $M'=M\oplus E(C_3)\oplus E(C_4)$ is a perfect matching of $G$ that is obtained from $M$ by two matching 2-switches and $M_s\cap M'=\{u_2v_2\}$. From Claim 1, we are done.

\textbf{Case 2.} $v_2u_3\notin M$. Without loss of generality, we can suppose that $v_2u_4\in M$. If $v_1u_4$ is an edge of $G$, then $C_5=v_1u_2v_2u_4v_1$ is an $M$-alternating 4-cycle. So
$M'=M\oplus E(C_5)$ is a perfect matching of $G$ and $M_s\cap M'=\{u_2v_2\}$. From Claim 1, we are done. Otherwise, $v_1u_4$ is not an edge of $G$. By Lemma \ref{2.2}, $\{u_1u_4,v_1v_4\}\subseteq E(G)$ (see Fig. \ref{p91}(c)). Next we distinguish the following two subcases according to $u_3v_4 \in M$ or not.
\begin{figure}
\centering
\includegraphics[height=6.5cm,width=14cm]{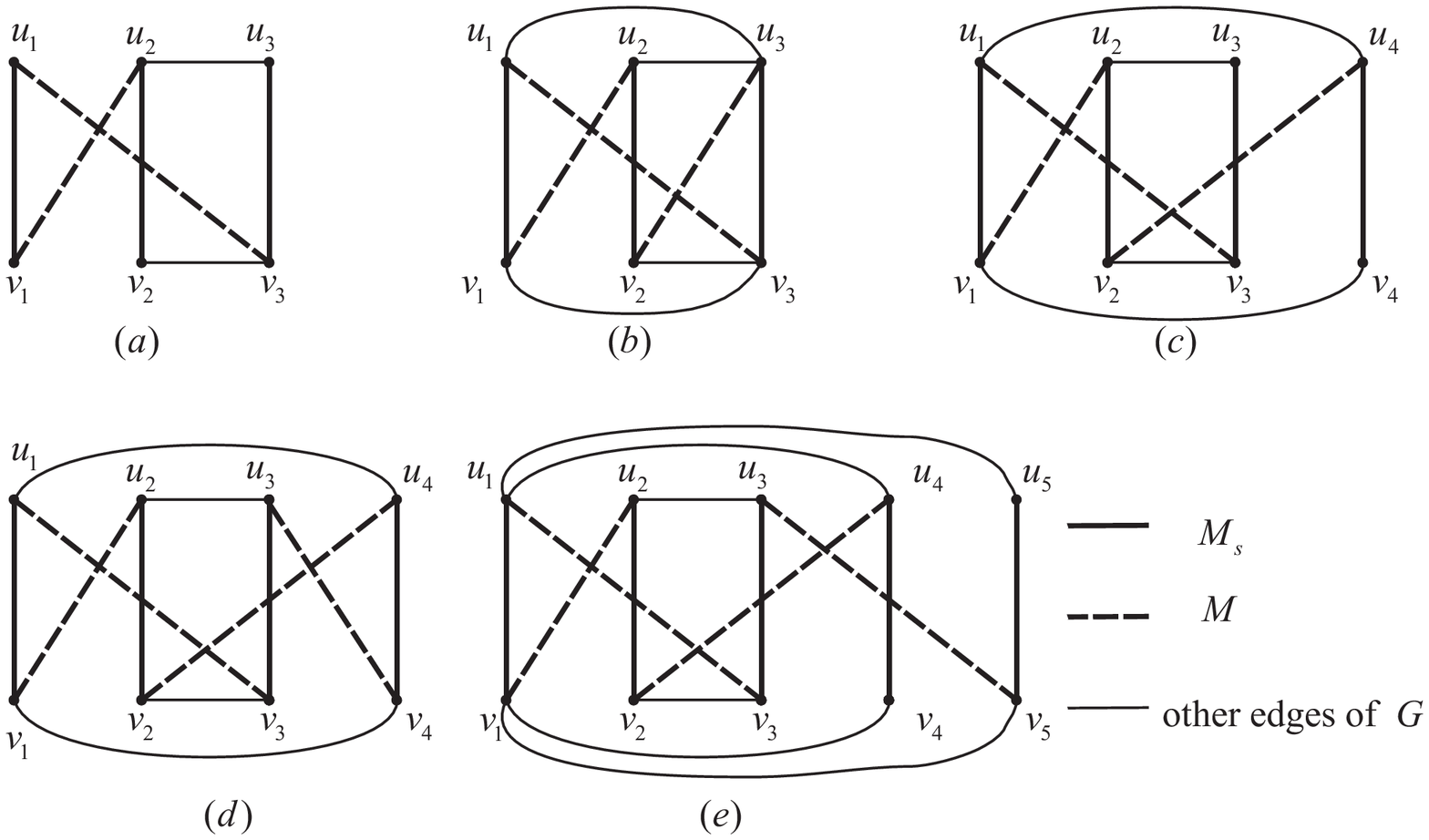}
\caption{\label{p91}Illustration for the proof of Theorem \ref{thm2}.}
\end{figure}

\textbf{Subcase 2.1.} $u_3v_4\in M$ (see Fig. \ref{p91}(d)). Then $C_6=u_1v_3v_2u_4u_1$ and $C_7=v_1v_4u_3u_2v_1$ are two $M$-alternating 4-cycles. By two matching 2-switches we have that $M'=M\oplus E(C_6) \oplus E(C_7)$ is a perfect matching of $G$. Hence, $M'=(M\setminus \{u_1v_3,v_2u_4,v_1u_2,u_3v_4\})\cup \{v_2v_3,u_1u_4,u_2u_3,v_1v_4\}$. So $M_s\oplus M'$ contains a 4-cycle $u_2u_3v_3v_2u_2$. From Claim 2, $M_s$ can be obtained from $M'$ by repeatedly applying matching 2-switches, so we are done.

\textbf{Subcase 2.2.} $u_3v_4\notin M$. Then we may suppose $u_3v_5\in M$. If $u_1v_5$ is an edge of $G$, then $C_8=u_1v_5u_3v_3u_1$ is an $M$-alternating 4-cycle. So $M'=M\oplus E(C_8)$ is a perfect matching of $G$ and $M_s\cap M'=\{u_3v_3\}$. From Claim 1, we are done. So we may suppose $u_1v_5\notin E(G)$. By Lemma \ref{2.2}, $\{v_1v_5,u_1u_5\}\subseteq E(G)$ (see Fig. \ref{p91}(e)). Then $C_9=u_2u_3v_5v_1u_2$ and $C_{6}$ are two $M$-alternating 4-cycles. By two matching 2-switches we have that $M'=M\oplus E(C_9) \oplus E(C_{6})$ is a perfect matching of $G$. Hence, $M'=(M\setminus \{u_1v_3,v_2u_4,v_1u_2,u_3v_5\})\cup \{v_2v_3,u_1u_4,u_2u_3,v_1v_5\}$. So $M_s\oplus M'$ contains a 4-cycle $u_2u_3v_3v_2u_2$. From Claim 2, we are done.
\end{proof}

\noindent{\normalsize \textbf{Acknowledgments}}

The authors are grateful to  anonymous reviewers for giving valuable comments and suggestions in improving the manuscript.

\end{document}